\newtheorem{theorem}{Theorem}
\newtheorem{proposition}{Proposition}
\newtheorem{lemma}{Lemma}
\newtheorem{remark}{Remark}
\newcommand{\argmax}{\mathop{\mathrm{argmax}}}
\newcommand{\bbE}{\mathbb{E}}
\newcommand{\bbF}{\mathbb{F}}
\newcommand{\bbP}{\mathbb{P}}
\newcommand{\bbR}{\mathbb{R}}
\begin{document}

\title{Maximum likelihood estimators for the extreme value index based on the block maxima method}
\author{ Cl\'ement Dombry\footnote{Universit\'e de Poitiers, Laboratoire de Mathématiques et Applications, UMR CNRS 7348, T\'el\'eport 2, BP 30179, F-86962 Futuroscope-Chasseneuil cedex, France.  
Email: clement.dombry@math.univ-poitiers.fr} } 
\maketitle
\date{}
\abstract{The maximum likelihood method offers a standard way to estimate the three parameters of a generalized extreme value (GEV) distribution. Combined with the block maxima method, it is often used in practice to assess the extreme value index and normalization constants of a distribution satisfying a first order extreme value condition, assuming implicitely that the block maxima are exactly GEV distributed. This is unsatisfactory since the GEV distribution is a good approximation of the block maxima distribution only for blocks of large size. The purpose of this paper is to provide a theoretical basis for this methodology. Under a first order extreme value condition only, we prove the existence and consistency of the maximum likelihood estimators for the extreme value index and normalization constants  within the framework of the block maxima method.}

\vspace{0.5cm}
\noindent
{\bf Key words}: extreme value index, maximum likelihood estimator, block maxima method, consistency.

\noindent
{\bf AMS Subject classification}: 62G32.

\section{Introduction and results}
Estimation of the extreme value index is a central problem in extreme value theory. A variety of estimators are available in the literature, for example among others, the Hill estimator \cite{H75}, the Pickand's estimator \cite{P75}, the probability weighted moment estimator introduced by Hosking {\it et al.} \cite{HWW85} or the moment estimator suggested by Dekkers {\it et al.} \cite{DEdH89}. The monographs by Embrechts {\it et al.} \cite{EKM97}, Beirlant {\it et al.} \cite{BGT04} or de Haan and Ferreira \cite{dHF06} provide good reviews on this estimation problem.

In this paper, we are interested on estimators based on the maximum likelihood method. Two different types of maximum likelihood estimators (MLEs) have been introduced, based on the peak over threshold method and block maxima method respectively. The peak over threshold method relies on the fact that, under the extreme value condition, exceedances over high threshold converge to a generalized Pareto distribution (GPD) (see Balkema and de Haan \cite{BdH74}). A MLE within the GPD model has been proposed by Smith \cite{S87}. Its theoretical properties under the extreme value condition are quite difficult to analyze due to the absence of an explicit expression of the likelihood equations: existence and consistency have been proven by Zhou \cite{Z09}, asymptotic normality  by Drees {\it et al.} \cite{DFdH04}. The block maxima method relies on the approximation of the maxima distribution by a generalized extreme value (GEV) distribution. Computational issues for ML estimation within the GEV model have been considered by Prescott and Walden \cite{PW80,PW83}, Hosking \cite{H85} and Macleod \cite{M89}. Since the support of the GEV distribution depends on the unknown extreme value index $\gamma$, the usual regularity conditions ensuring good asymptotic properties  are not satisfied. This problem is studied by Smith \cite{S85}: asymptotic normality is proven for $\gamma>-1/2$ and consistency for $\gamma>-1$.

It should be stressed that the block maxima method is based on the assumption that the observations come from a distribution satisfying the extreme value condition so that the maximum of a large numbers of observations follows approximatively a generalized extreme value (GEV) distribution. On the contrary, the properties of the maximum likelihood relies implicitely on the assumption that the  block maxima have {\it exactly} a GEV distribution. In many situations, this strong assumption is unsatisfactory and we shall only suppose that the underlying distribution is in the domain of attraction of an extreme value distribution. This is the purpose of the present paper to justify the maximum likelihood method for the block maxima method under an extreme value condition only.

We first recall some basic notions of univariate extreme value theory. The extreme value distribution distribution with index $\gamma$ is noted $G_\gamma$ and has distribution function
\[
F_{\gamma}(x)=\exp(-(1+\gamma x)^{-1/\gamma}),\quad 1+\gamma x>0.
\]
We say that a distribution function $F$ satisfies the extreme value condition with index $\gamma$, or equivalently that $F$ belongs to the domain of attraction of $G_\gamma$ if there exist constants $a_m>0$ and $b_m$ such that
\begin{equation}\label{eq:DG}
\lim_{m\to +\infty} F^m(a_mx+b_m)=F_\gamma(x),\quad x\in\bbR.
\end{equation}
That is commonly denoted $F\in D(G_\gamma)$. The necessary and sufficient conditions for $F\in D(G_\gamma)$ can be presented in different ways, see e.g. de Haan \cite{dH84} or de Haan and Ferreira \cite[chapter 1]{dHF06}. We remind the following simple criterion and choice of normalization constants. 
\begin{theorem}\label{theo:DA}
Let $U=\Big(\frac{1}{1-F}\Big)^\leftarrow$ be the left continuous inverse function of $1/(1-F)$. Then $F\in D(G_\gamma)$ if and only if there exists a function $a(t)>0$ such that
\[
\lim_{t\to+\infty} \frac{U(tx)-U(t)}{a(t)}=\frac{x^\gamma-1}{\gamma},\quad \mbox{for all}\ x>0.
\]
Then, a possible choice for the function  $a(t)$ is given by
\[
a(t)=\left\{\begin{array}{ll} \gamma U(t),& \gamma>0,\\ -\gamma(U(\infty)-U(t)), & \gamma <0,\\ U(t)-t^{-1}\int_0^{t}U(s)ds, &\gamma=0,\end{array}\right.
\]
and a possible choice for the normalization constants in \eqref{eq:DG} is 
\[
a_m=a(m) \quad\mbox{and}\quad b_m=U(m).
\]
\end{theorem}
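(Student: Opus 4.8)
The plan is to prove this classical characterization by first transferring the domain-of-attraction condition \eqref{eq:DG} on $F$ into an equivalent first-order condition on the tail quantile function $U$, and then reading off the explicit auxiliary function $a(t)$ case by case from standard regular variation theory (Karamata's theorem, the de Haan class $\Pi$, and Potter-type uniform convergence bounds).

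The first step is the passage from $F$ to $U$. Starting from \eqref{eq:DG}, convergence $F^m(a_mx+b_m)\to F_\gamma(x)$ at continuity points forces $F(a_mx+b_m)\to1$ whenever $1+\gamma x>0$, so that, taking logarithms and linearizing $\log(1-u)\sim-u$, one gets $m\,(1-F(a_mx+b_m))\to(1+\gamma x)^{-1/\gamma}=:\psi(x)$. Since $U$ is the generalized inverse of $1/(1-F)$, up to the usual care with generalized inverses this is equivalent to $a_mx+b_m\approx U(m/\psi(x))$; substituting $x=(y^\gamma-1)/\gamma$, equivalently $y=1/\psi(x)$ (with $\psi(x)=e^{-x}$ read off in the limit $\gamma=0$), gives
\[
\frac{U(my)-b_m}{a_m}\;\longrightarrow\;\frac{y^\gamma-1}{\gamma},\qquad y>0.
\]
Monotonicity of $U$ and continuity of the candidate limit then allow one to interpolate the integer index $m$ into a continuous variable $t$ with suitable $a(t)>0$ and $b(t)$; the choice $y=1$ shows $(U(t)-b(t))/a(t)\to0$, hence $b(t)$ may be replaced by $U(t)$, which is precisely the stated criterion with \emph{some} admissible $a$. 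The converse implication is obtained by running these equivalences in reverse, and it is exactly this reversed argument that shows $a_m=a(m)$ and $b_m=U(m)$ are admissible normalizing constants in \eqref{eq:DG}.

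The second step is to identify the explicit $a(t)$, treating the three sign regimes separately. For $\gamma>0$, one first shows that any admissible auxiliary function is itself in $\mathrm{RV}_\gamma$, whence $U\in\mathrm{RV}_\gamma$ and $a(t)\sim\gamma U(t)$, so that $(U(ty)-U(t))/(\gamma U(t))=\gamma^{-1}(U(ty)/U(t)-1)\to(y^\gamma-1)/\gamma$. For $\gamma<0$, the same information shows the right endpoint $U(\infty)=\sup\{x:F(x)<1\}$ is finite and that $t\mapsto U(\infty)-U(t)$ is regularly varying with index $\gamma$; writing $U(ty)-U(t)=(U(\infty)-U(t))-(U(\infty)-U(ty))$ and dividing by $U(\infty)-U(t)$ gives the limit $1-y^\gamma=-\gamma\,(y^\gamma-1)/\gamma$, so $a(t)=-\gamma(U(\infty)-U(t))$ works. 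For $\gamma=0$, the relation of Step 1 says exactly that $U$ belongs to de Haan's class $\Pi$; invoking the averaging property of $\Pi$-varying functions one checks that the Ces\`aro-type function $a(t)=U(t)-t^{-1}\int_0^tU(s)\,ds$ lags behind $U$ by an admissible auxiliary function, and then $(U(ty)-U(t))/a(t)\to\log y$ follows.

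The routine-looking parts — the linearization, the care with generalized inverses, the interpolation from $m$ to $t$, and the reversal of the substitution — require some attention to continuity points and to the uniformity of the underlying convergences, but I expect the genuine obstacle to be the $\gamma=0$ case: showing that membership in $\Pi$ together with the explicit averaged auxiliary function reproduces the limit $\log y$ relies on the full theory of $\Pi$-variation (monotone-density and Karamata arguments for slowly varying functions), and this is where essentially all of the real work lies; for $\gamma\neq0$ the argument reduces to classical Karamata theory and is comparatively short.
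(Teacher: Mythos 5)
The paper itself does not prove Theorem~\ref{theo:DA}: it is recalled as a known criterion, with the reader referred to de Haan and to Chapter~1 of de Haan and Ferreira, so there is no internal proof to compare yours against. Your outline is precisely the standard route those references take: pass from \eqref{eq:DG} to $m\,(1-F(a_mx+b_m))\to(1+\gamma x)^{-1/\gamma}$ by taking logarithms, invert (with the usual lemma on generalized inverses of monotone functions) to obtain $(U(my)-b_m)/a_m\to(y^\gamma-1)/\gamma$, replace $b_m$ by $U(m)$ via the value at $y=1$, interpolate the integer $m$ to a continuous $t$ using monotonicity of $U$ together with the convergence-of-types relations \eqref{eq:normseq}, and then identify an admissible $a(t)$ in each sign regime by regular variation; the converse direction reverses these steps and yields the admissibility of $a_m=a(m)$, $b_m=U(m)$. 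Two points deserve to be made precise before this counts as a proof rather than a plan. First, for $\gamma\neq0$ the substantive step is the one you pass over with ``one first shows'': from the limit relation with an arbitrary admissible $a$ one must deduce $U(t)/a(t)\to1/\gamma$ (respectively $U(\infty)<\infty$ and $(U(\infty)-U(t))/a(t)\to-1/\gamma$); this is a genuine lemma (Lemma~1.2.9 in de Haan--Ferreira) and should be proved or cited explicitly. Second, the $\gamma=0$ case is lighter than you anticipate: since $t^{-1}\int_0^tU(s)\,ds=\int_0^1U(ts)\,ds$, one has
\[
U(t)-t^{-1}\int_0^tU(s)\,ds=\int_0^1\bigl(U(t)-U(ts)\bigr)\,ds,
\]
and the locally uniform convergence $(U(t)-U(ts))/a(t)\to-\log s$ together with a Potter-type integrable bound lets you pass the limit under the integral, giving $a(t)\sim U(t)-t^{-1}\int_0^tU(s)\,ds$ because $\int_0^1(-\log s)\,ds=1$; no monotone-density machinery is required. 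With these two steps filled in, your argument is correct and coincides with the proof in the sources the paper cites.
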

\noindent
In the sequel, we will always use the normalization constants $(a_m)$ and $(b_m)$ given in Theorem \ref{theo:DA}. Note that they are unique up to asymptotic equivalence in the following sense: if $(a'_m)$ and $(b'_m)$ are such that $F^m(a_m'x+b_m')\to F_\gamma(x)$ for all $x\in\bbR$, then
\begin{equation}\label{eq:normseq}
\lim_{m\to +\infty} \frac{a_m'}{a_m}=1\quad \mbox{and} \lim_{m\to +\infty} \frac{b_m'-b_m}{a_m}=0.
\end{equation}

The log-likelihood of the extreme value distribution $G_\gamma$ is given by
\[
 \ell_{\gamma}(x)=-(1+1/\gamma)\log(1+\gamma x)-(1+\gamma x)^{-1/\gamma},
\]
if $1+\gamma x>0$ and $-\infty$ otherwise. For $\gamma=0$, the formula is interpreted as  $\ell_{0}(x)=- x-\exp(-x)$.
The three parameter extreme value distribution with shape  $\gamma$, location $\mu$ and scale $\sigma>0$ has distribution function $x\mapsto F_\gamma(\sigma x+\mu)$. The corresponding log-likelihood is
\[
 \ell_{(\gamma,\mu,\sigma)}(x)=\ell_\gamma\Big(\frac{x-\mu}{\sigma}\Big)-\log\sigma.
\] 

The set-up of the block maxima method is the following. We consider  independent and identically distributed (i.i.d.)  random variables $(X_i)_{i\geq 1}$ with common distribution function $F\in D(G_{\gamma_0})$ and corresponding normalization sequences $(a_m)$ and $(b_m)$ as in Theorem \ref{theo:DA}. We divide the sequence $(X_i)_{i\geq 1}$  into blocks of length $m\geq 1$ and define the $k$-th block maximum by
\[
 M_{k,m}=\max(X_{(k-1)m+1},\ldots,X_{km}),\quad k\geq 1.
\]
For fixed $m\geq 1$, the variables $(M_{k,m})_{k\geq 1}$ are i.i.d. with distribution function $F^m$ and  
\begin{equation}\label{eq:convBM}
\frac{M_{k,m}-b_m}{a_m}\Longrightarrow G_{\gamma_0}\quad \mbox{as}\ m\to +\infty.
\end{equation}
Equation \eqref{eq:convBM} suggests that the distribution of  $M_{k,m}$ is approximately a GEV distribution with parameters $(\gamma_0,b_m,a_m)$ and this is standard to estimate these parameters by the maximum likelihood method. The log-likelihood of the $n$-sample  $(M_{1,m},\ldots,M_{n,m})$ is 
\[
L_n(\gamma,\sigma,\mu)=\frac{1}{n}\sum_{k=1}^n  \ell_{(\gamma,\mu,\sigma)}(M_{k,m}).
\]
In general,  $L_n$ has no global maximum, leading us to the following weak notion: we say that $(\widehat\gamma_n,\widehat \mu_n,\widehat\sigma_n)$ is a MLE  if  $L_n$ has a \emph{local} maximum at $(\widehat\gamma_n,\widehat \mu_n,\widehat\sigma_n)$. Clearly, a MLE solves the  likelihood equations
\begin{equation}\label{eq:likeq}
\nabla L_n=0 \quad \mbox{with}\quad \nabla L_n=\Big(\frac{\partial L_n}{\partial \gamma},\frac{\partial L_n}{\partial \mu},\frac{\partial L_n}{\partial \sigma}\Big).
\end{equation}
Conversely, any solution of the likelihood equations with a definite negative Hessian matrix is a MLE.

For the purpose of asymptotic, we let the length of the blocks $m=m(n)$ depend on the sample size $n$. Our main result is the following theorem, stating the existence of consistent MLEs. 
\begin{theorem}\label{theo1}
Suppose $F\in D(G_{\gamma_0})$ with  $\gamma_0>-1$  and assume that 
\begin{equation}\label{eq:growth}
\lim_{n\to +\infty} \frac{m(n)}{\log n}=+\infty.
\end{equation}
Then there exists  a sequence of estimators $(\widehat\gamma_n,\widehat \mu_n,\widehat\sigma_n)$ and a random integer $N\geq 1$ such that
\begin{equation}\label{eq:theo1.1}
\bbP[(\widehat\gamma_n,\widehat \mu_n,\widehat\sigma_n)\ \mbox{is a MLE for all}\ n\geq N ]= 1 
\end{equation}
and
\begin{equation}\label{eq:theo1.2}
\widehat \gamma_n\stackrel{a.s.}\longrightarrow \gamma_0,\quad \frac{\widehat\mu_n-b_m}{a_m}\stackrel{a.s.}\longrightarrow 0\quad \mbox{and}\quad \frac{\widehat\sigma_n}{a_m}\stackrel{a.s.}\longrightarrow 1\quad \mbox{as} \ n\to+\infty.
\end{equation}
\end{theorem}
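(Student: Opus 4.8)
The plan is to reduce, through the natural affine reparametrization, to an $M$-estimation problem for the log-likelihood contrast of the GEV family in which the observations form a triangular array of i.i.d.\ block maxima whose common law converges to $G_{\gamma_0}$, and then to run a Wald-type consistency argument; the two nonstandard features are that the contrast $\ell_\theta$ is unbounded (it tends to $-\infty$ at the moving endpoint of $\mathrm{supp}(G_\theta)$) and that the data follow the fitted model only approximately. Concretely, I pass to the rescaled block maxima $Y_{k,m}=(M_{k,m}-b_m)/a_m$. Since $\ell_{(\gamma,\mu,\sigma)}(M_{k,m})=\ell_{(\gamma,\,(\mu-b_m)/a_m,\,\sigma/a_m)}(Y_{k,m})-\log a_m$ and the additive constant $-\log a_m$ does not depend on $(\gamma,\mu,\sigma)$, the smooth bijection $(\gamma,\mu,\sigma)\mapsto(\gamma,(\mu-b_m)/a_m,\sigma/a_m)$ carries MLEs of $L_n$ to MLEs of $\widetilde L_n(\theta):=\tfrac1n\sum_{k=1}^n\ell_\theta(Y_{k,m})$; hence it suffices to exhibit a sequence of local maximizers $\widehat\theta_n$ of $\widetilde L_n$ with $\widehat\theta_n\to\theta_0:=(\gamma_0,0,1)$ almost surely, which yields \eqref{eq:theo1.1} and \eqref{eq:theo1.2} simultaneously. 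For fixed $m$ the $Y_{k,m}$ are i.i.d.\ with distribution function $F_m:=F^m(a_m\,\cdot+b_m)$ and $F_m\Rightarrow G_{\gamma_0}$ by \eqref{eq:convBM}, so the relevant limiting contrast is $\lambda(\theta):=\int\ell_\theta\,dG_{\gamma_0}$.

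The deterministic input is clean. As $\ell_\theta$ is bounded from above (its maximum over the argument equals $(\gamma+1)\log(\gamma+1)-(\gamma+1)$, finite for $\gamma>-1$), $\lambda(\theta)\in[-\infty,\infty)$ is well defined for every $\theta$ and $\lambda(\theta_0)$ is finite. Whenever $\lambda(\theta)>-\infty$ one has $\lambda(\theta_0)-\lambda(\theta)=\int\log(g_{\theta_0}/g_\theta)\,dG_{\gamma_0}$, the Kullback--Leibler divergence of $G_{\gamma_0}=G_{\theta_0}$ from the GEV law $G_\theta$ ($g_\theta$ denoting its density), which is nonnegative and, by identifiability of the three-parameter GEV family, vanishes exactly at $\theta=\theta_0$. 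Thus $\theta_0$ is the \emph{unique} maximizer of $\lambda$, $\lambda$ is upper semicontinuous near $\theta_0$ (it drops to $-\infty$ on the region where $\mathrm{supp}(G_\theta)\not\supseteq\mathrm{supp}(G_{\gamma_0})$, a set whose closure touches $\theta_0$), and in particular $\sup_{\theta\in\partial B_\varepsilon(\theta_0)}\lambda(\theta)<\lambda(\theta_0)$ for every small ball. Because everything rests on the KL divergence rather than on a quadratic expansion, no Fisher-information argument is needed, and the whole range $-1<\gamma_0\le-1/2$ — where the GEV Fisher information is infinite — is covered on the same footing.

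The technical heart is an almost-sure uniform law of large numbers for $\widetilde L_n$ over a fixed small closed ball $\bar B_\varepsilon(\theta_0)$:
\[
\widetilde L_n(\theta_0)\stackrel{\mathrm{a.s.}}{\longrightarrow}\lambda(\theta_0)
\qquad\text{and}\qquad
\limsup_{n\to\infty}\ \sup_{\theta\in\bar B_\varepsilon(\theta_0)}\bigl(\widetilde L_n(\theta)-\lambda(\theta)\bigr)\le 0\ \text{ a.s.},
\]
together with $\sup_{\theta\in K}|\widetilde L_n(\theta)-\lambda(\theta)|\to0$ a.s.\ on every compact $K\subset\{\lambda>-\infty\}$. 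One splits $\widetilde L_n(\theta)-\lambda(\theta)$ into a bias term $\int\ell_\theta\,dF_m-\int\ell_\theta\,dG_{\gamma_0}$, controlled uniformly in $\theta$ by the convergence estimates behind \eqref{eq:convBM}, and a fluctuation term $\tfrac1n\sum_{k=1}^n\bigl(\ell_\theta(Y_{k,m})-\int\ell_\theta\,dF_m\bigr)$, controlled by bracketing $\{\ell_\theta:\theta\in\bar B_\varepsilon(\theta_0)\}$ by finitely many envelopes that are bounded above with light enough lower tails, a Bernstein-type inequality applied to each i.i.d.\ sum (after a suitable lower truncation), and Borel--Cantelli. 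The delicate point, and the place where \eqref{eq:growth} is used, is that $\ell_\theta(y)\to-\infty$ as $y$ approaches the endpoint of $\mathrm{supp}(G_\theta)$, so both terms are sensitive to the extreme rescaled block maxima $\min_{k\le n}Y_{k,m}$ and $\max_{k\le n}Y_{k,m}$; for instance when $\gamma_0>0$ one has $1+\gamma_0\min_{k\le n}Y_{k,m}=\min_{k\le n}M_{k,m}/U(m)$, which behaves like $U(m/\log n)/U(m)$ and can become negative — so that $\widetilde L_n(\theta_0)=-\infty$ — when $m$ grows too slowly, while for $\gamma_0\le0$ the position of $\max_{k\le n}Y_{k,m}$ relative to the right endpoint $-1/\gamma_0$ of $\mathrm{supp}(G_{\gamma_0})$ plays the analogous role. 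The assumption $m(n)/\log n\to+\infty$ guarantees, via a Borel--Cantelli argument on the $nm$ underlying variables, that almost surely for large $n$ every $Y_{k,m}$ lies in a part of $\mathrm{supp}(G_{\gamma_0})$ on which the extreme-value approximation $F_m\approx G_{\gamma_0}$ is uniformly accurate at the relevant logarithmic scales, so that the extreme blocks contribute only $o(1)$ to $\widetilde L_n$ and the deviation probabilities are summable in $n$. I expect this uniform control of the empirical log-likelihood near the boundary of the support — the interplay between the blow-up of $\ell_\theta$, the extremes of the triangular array, and the precise rate in \eqref{eq:growth} — to be the main obstacle.

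Granting the uniform law of large numbers, the conclusion is routine. Let $\widehat\theta_n$ be a maximizer of $\widetilde L_n$ over the compact ball $\bar B_\varepsilon(\theta_0)$ (which exists since $\widetilde L_n$ is upper semicontinuous), chosen arbitrarily if that maximum is $-\infty$. From $\widetilde L_n(\widehat\theta_n)\ge\widetilde L_n(\theta_0)\to\lambda(\theta_0)$ and the one-sided bound, $\lambda(\widehat\theta_n)\ge\widetilde L_n(\widehat\theta_n)-o(1)\ge\lambda(\theta_0)-o(1)$, which with the upper semicontinuity of $\lambda$ and the uniqueness of its maximizer forces $\widehat\theta_n\to\theta_0$ almost surely. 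In particular $\widehat\theta_n$ eventually lies in the interior of $B_\varepsilon(\theta_0)$, and since $\widetilde L_n(\widehat\theta_n)\to\lambda(\theta_0)>-\infty$ it eventually lies in the open set $\{\widetilde L_n>-\infty\}$, on which $\widetilde L_n$ is smooth; hence for all $n$ beyond some almost surely finite random $N$ the point $\widehat\theta_n$ is a local maximum of $\widetilde L_n$, so it solves $\nabla\widetilde L_n=0$ and is a MLE of $\widetilde L_n$. Transporting $\widehat\theta_n$ back through the affine reparametrization of the first paragraph produces $(\widehat\gamma_n,\widehat\mu_n,\widehat\sigma_n)$ that is a MLE of $L_n$ for all $n\ge N$ — this is \eqref{eq:theo1.1} — and that satisfies $\widehat\gamma_n\to\gamma_0$, $(\widehat\mu_n-b_m)/a_m\to0$ and $\widehat\sigma_n/a_m\to1$ almost surely, which is \eqref{eq:theo1.2}.
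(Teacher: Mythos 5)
Your overall route is the paper's route: the same affine reparametrization to $\widetilde L_n$, the same limiting contrast $\lambda(\theta)=G_{\gamma_0}[\ell_\theta]$ with uniqueness of its maximizer via Kullback--Leibler and identifiability, and the same endgame (argmax over a compact neighborhood of $\theta_0$, consistency, hence eventual interior local maximality and thus a MLE). The genuine gap is that the technical heart is asserted rather than proved, and one of the two statements you rest on is misformulated. First, the claimed bound $\limsup_n\sup_{\theta\in\bar B_\varepsilon(\theta_0)}(\widetilde L_n(\theta)-\lambda(\theta))\le 0$ cannot hold as written: as you yourself note, every neighborhood of $\theta_0$ contains parameters $\theta$ with $\mathrm{supp}(G_\theta)\not\supseteq\mathrm{supp}(G_{\gamma_0})$, i.e.\ $\lambda(\theta)=-\infty$, while $\widetilde L_n(\theta)$ is typically finite, so the difference is $+\infty$ there; and the even stronger claim $\sup_{\theta\in K}|\widetilde L_n(\theta)-\lambda(\theta)|\to0$ is both unneeded and false in general (a single rescaled maximum outside $\mathrm{supp}(G_\theta)$ makes $\widetilde L_n(\theta)=-\infty$ at points where $\lambda(\theta)$ is finite). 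The correct one-sided statement must be phrased through localized envelopes: for balls $B$, $\limsup_n\bbP_n[\ell_B]\le G_{\gamma_0}[\ell_B]$ with $\ell_B=\sup_{\theta\in B}\ell_\theta$ upper semicontinuous and bounded above (the paper's Lemma~\ref{lem:Fatou}, a Glivenko--Cantelli-plus-Fatou argument needing no growth condition), combined with $G_{\gamma_0}[\ell_{B(\theta,\varepsilon)}]\to G_{\gamma_0}[\ell_\theta]$ as $\varepsilon\to0$ (Lemma~\ref{lem:wald2}) to get strict separation below $\lambda(\theta_0)$ on the annulus; this formulation handles the $-\infty$ values that defeat your version.

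Second, the statement you do need pointwise, $\widetilde L_n(\theta_0)\to\lambda(\theta_0)$ a.s., is exactly where all the work lies, and your sketch (``bias term controlled uniformly in $\theta$ by the convergence estimates behind \eqref{eq:convBM}'', bracketing plus a Bernstein-type inequality after truncation) does not close it: weak convergence $F_m\Rightarrow G_{\gamma_0}$ gives no control of integrals of the unbounded function $\ell_{\gamma_0}$; one needs uniform-in-$m$ moment bounds for $\ell_{\gamma_0}((M_{1,m}-b_m)/a_m)$, which in the paper come from regular variation (Potter's bounds on $U$, Theorem~B.2.18 of de Haan--Ferreira for $\gamma_0=0$), and the truncation level must be tied quantitatively to \eqref{eq:growth}: the appendix truncates at $\alpha_n=\exp(m/(2pm_0))$, bounds the truncated $p$-th moments ($p>2$) uniformly, applies a Marcinkiewicz--Zygmund/Petrov moment inequality to the truncated sums, and uses Borel--Cantelli, with $m(n)/\log n\to\infty$ entering precisely to make $\sum_n n\alpha_n^{-p}<\infty$. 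Your heuristic for the role of \eqref{eq:growth} (the minimum rescaled block maximum behaving like $U(m/\log n)/U(m)$) is the right intuition, but no version of this quantitative argument is carried out, and you explicitly flag it as the expected main obstacle; since it is the only place the hypothesis \eqref{eq:growth} is used and the only nontrivial probabilistic estimate in the whole proof, the proposal as it stands does not constitute a proof of the theorem.
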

The condition $\gamma_0>-1$ is natural and agrees with Smith \cite{S85}: it is easy to see that the likelihood equation \eqref{eq:likeq} has no solution with $\gamma\leq 1$ so that no consistent MLE exists when $\gamma_0<-1$ (see Remark \ref{rk3} below). Condition \eqref{eq:growth} states that the block length $m(n)$ grows faster than logarithmically in the sample size $n$, which is not very restrictive. Let us mention a few further remarks on this condition.
\begin{remark}\label{rk1}{\rm
A control of the block size is needed, as the following simple example shows. 
Consider a distribution $F\in D(G_{\gamma_0})$ with $\gamma_0>0$ and such that the left endpoint of $F$ is equal to $-\infty$. Then for each $m\geq 1$, the distribution of the $m$-th block maxima $M_{k,m}$ has left endpoint equal to $-\infty$ and there exist a sequence $m(n)$ (growing slowly to $+\infty$) such that
\begin{equation}\label{eq:diverge}
\lim_{n\to +\infty}\min_{1\leq k\leq n} \frac{M_{k,m}-b_m}{a_m} =-\infty \quad \mbox{almost surely.}
\end{equation}
The  log-likelihood  $L_n(\gamma,\sigma,\mu)$ is finite if and only if
\[
\min_{1\leq k\leq n}\Big( 1+\gamma \frac{M_{k,m}-\mu}{\sigma}\Big) >0,
\]
so that any MLE $(\widehat\gamma_n,\widehat \mu_n,\widehat\sigma_n)$ must satisfy
\[
\min_{1\leq k\leq n}\Big( 1+\widehat\gamma_n \frac{M_{k,m}-\widehat\mu_n}{\widehat\sigma_n}\Big) >0.
\]
Using this observations, one shows easily that Equation \eqref{eq:diverge} is an obstruction for the consistency \eqref{eq:theo1.2} of the MLE. Of course, this phenomenon can not happen under condition \eqref{eq:growth}.}
\end{remark}
\begin{remark}\label{rk2}{\rm
It shall be stressed that condition \eqref{eq:growth} appears only in the proof of Lemma~\ref{lem:crucial} below. One can prove that under stronger assumptions on the distribution $F\in D(G_{\gamma_0})$,  condition \eqref{eq:growth}. This is for example the case if $F$ is a Pareto distribution function: one checks easily that the proof of Lemma~\ref{lem:crucial} goes through  under the weaker condition  $\lim_{n\to +\infty} m(n)=+\infty$. Hence Theorem~\ref{theo1} holds under this weaker condition in the Pareto case. In order to avoid technical conditions that are hard to check in practice when $F$ is unknown, we do not develop this direction  any further.}
\end{remark}

The structure of the paper is the following. We gather in Section 2 some preliminaries on properties of the GEV log-likelihood and of the empirical distribution associated to normalized block maxima. Section 3 is devoted to the proof of Theorem \ref{theo1}, which relies on an adaptation of Wald's method for proving the consistency of $M$-estimators.
Some technical computations (proof of Lemma \ref{lem:crucial}) involving regular variations theory are postponed to an Appendix.

\section{Preliminaries}
\subsection{Properties of the GEV log-likelihood}
We gather in the following proposition some basic properties of the GEV log-likelihood. We note $x_\gamma^-$ and $x_\gamma^+$  the left and right end point of the domain $\ell_\gamma$, i.e.
\[
(x_\gamma^-,x_\gamma^+)=\{x\in\bbR;\ 1+\gamma x>0\}.
\]
Clearly,  it is equal to  $(-\infty,-1/\gamma)$, $\bbR$ and $(-1/\gamma,+\infty)$  when   $\gamma<0$,  $\gamma=0$ and  $\gamma>0$ respectively. 
\begin{proposition}\label{prop:GEV}
The function $\ell_\gamma$ is smooth on its domain. 
\begin{enumerate}
\item If $\gamma\leq -1$,  $\ell_\gamma$ is stricly increasing on its domain and
\[
 \lim_{x\to x_\gamma^-}\ell_\gamma(x)=-\infty\quad \quad \quad \lim_{x\to x_\gamma^+}\ell_\gamma(x)=\left\{\begin{array}{cc}+\infty & \mbox{if } \gamma<-1\\0 & \mbox{if } \gamma=-1 \end{array}\right.. 
\]
\item If $\gamma >-1$,  $\ell_\gamma$ is  increasing on $(x_\gamma^-,x_\gamma^\ast]$ and  decreasing  on $[x_\gamma^\ast,x_\gamma^+)$, where
\[
x_\gamma^\ast= \frac{ (1+\gamma)^{-\gamma}-1}{\gamma}.
\]
Furthermore
\[
 \lim_{x\to x_\gamma^-}\ell_\gamma(x)=\lim_{x\to x_\gamma^+}\ell_\gamma(x)=-\infty
\]
and $\ell_\gamma$ reaches its maximum $ \ell_\gamma(x_\gamma^\ast)=(1+\gamma)(\log(1+\gamma)-1)$ uniquely.
\end{enumerate}
\end{proposition}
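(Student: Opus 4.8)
The plan is to verify the claims by direct computation with the explicit formula for $\ell_\gamma$, treating the cases $\gamma\neq 0$ and $\gamma=0$ separately and using the substitution that linearizes the expression. Since $\ell_\gamma$ is a composition of $\log$ and power functions on the open interval $\{1+\gamma x>0\}$, smoothness there is immediate. The key is to understand the sign of $\ell_\gamma'$. For $\gamma\neq 0$, write $u=(1+\gamma x)^{-1/\gamma}>0$, which is a smooth decreasing (if $\gamma>0$) or increasing (if $\gamma<0$) bijection onto $(0,+\infty)$; in either case differentiating $\ell_\gamma(x)=(1+1/\gamma)\log u - u$ gives
\[
\ell_\gamma'(x)=\frac{\partial u}{\partial x}\Big(\frac{1+1/\gamma}{u}-1\Big),\qquad \frac{\partial u}{\partial x}=-(1+\gamma x)^{-1/\gamma-1}.
\]
So $\ell_\gamma'(x)=(1+\gamma x)^{-1/\gamma-1}\big(1-(1+1/\gamma)(1+\gamma x)^{1/\gamma}\big)$, and since the prefactor $(1+\gamma x)^{-1/\gamma-1}$ is positive on the domain, the sign of $\ell_\gamma'(x)$ is the sign of $h(x):=1-(1+1/\gamma)(1+\gamma x)^{1/\gamma}$ (suitably interpreted when $1+1/\gamma\le 0$). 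I would then analyze $h$: it is monotone in $x$, and $h(x)=0$ exactly when $(1+\gamma x)^{1/\gamma}=\gamma/(1+\gamma)$, i.e. $1+\gamma x=(1+\gamma)^{-\gamma}$, which rearranges to $x=x_\gamma^\ast=\big((1+\gamma)^{-\gamma}-1\big)/\gamma$. One checks this critical point lies in the domain precisely when $\gamma>-1$ (when $\gamma\le -1$ the factor $1+1/\gamma\le 0$ and $h$ has no zero, so $\ell_\gamma$ is strictly monotone — increasing, after checking the sign). For $\gamma>-1$, tracking whether $h$ changes from $+$ to $-$ or vice versa across $x_\gamma^\ast$ gives the stated increase-then-decrease behavior.

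Next I would compute the boundary limits. For $\gamma>0$, as $x\downarrow x_\gamma^-=-1/\gamma$ we have $1+\gamma x\downarrow 0$, so $u=(1+\gamma x)^{-1/\gamma}\to+\infty$ and $\log(1+\gamma x)\to-\infty$; the term $-u$ dominates in $\ell_\gamma$, giving $-\infty$. As $x\to+\infty$, $1+\gamma x\to+\infty$, so $u\to 0$ and $\log(1+\gamma x)\to+\infty$; the term $-(1+1/\gamma)\log(1+\gamma x)\to-\infty$ dominates (here $1+1/\gamma>0$), giving $-\infty$. The case $-1<\gamma<0$ is symmetric, with the roles of the two endpoints exchanged, and one uses $1+1/\gamma>0$ at the endpoint where $\log(1+\gamma x)\to+\infty$. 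For $\gamma\le -1$: at $x_\gamma^-=-\infty$, $1+\gamma x\to+\infty$ so $u\to 0$ and $-(1+1/\gamma)\log(1+\gamma x)\to+\infty$ when $\gamma<-1$ but is identically handled when $\gamma=-1$ (where $\ell_{-1}(x)=-(1-x)^{-1}$, giving limit $0$ at the right end $x_{-1}^+=1$ and $-\infty$ at $-\infty$). The $\gamma=0$ case uses $\ell_0(x)=-x-e^{-x}$ directly: $\ell_0'(x)=-1+e^{-x}$, which vanishes at $x=0=x_0^\ast$, is positive for $x<0$ and negative for $x>0$, and $\ell_0\to-\infty$ at both $\pm\infty$; note this matches $x_\gamma^\ast\to 0$ as $\gamma\to 0$.

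Finally, the value at the maximum: substituting $1+\gamma x_\gamma^\ast=(1+\gamma)^{-\gamma}$ into $\ell_\gamma$ gives $\ell_\gamma(x_\gamma^\ast)=-(1+1/\gamma)\log\big((1+\gamma)^{-\gamma}\big)-(1+\gamma)=-(1+1/\gamma)(-\gamma)\log(1+\gamma)-(1+\gamma)=(1+\gamma)\log(1+\gamma)-(1+\gamma)=(1+\gamma)\big(\log(1+\gamma)-1\big)$, as claimed, with uniqueness following from the strict sign behavior of $h$ (equivalently, the critical point of $\ell_\gamma$ is unique). The $\gamma=0$ value is $\ell_0(0)=-1$, consistent with the limit of $(1+\gamma)(\log(1+\gamma)-1)$ as $\gamma\to 0$. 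I do not anticipate a serious obstacle here: the only place requiring care is bookkeeping the sign of $1+1/\gamma$ and the direction of monotonicity of $u\mapsto x$ across the three regimes $\gamma<-1$, $-1\le\gamma<0$, $\gamma>0$, so that the endpoint limits and the increase/decrease pattern come out with the correct orientation.
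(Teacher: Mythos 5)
Your overall strategy---differentiate $\ell_\gamma$, factor the derivative, locate its unique zero, and check the endpoint limits---is the natural one (the paper itself states this proposition without proof, as elementary calculus), but your computation contains a systematic slip that propagates. With $u=(1+\gamma x)^{-1/\gamma}$ one has $\log(1+\gamma x)=-\gamma\log u$, hence $\ell_\gamma(x)=(1+\gamma)\log u-u$, \emph{not} $(1+1/\gamma)\log u-u$; the correct sign factor is therefore $h(x)=1-(1+\gamma)(1+\gamma x)^{1/\gamma}$. With your $h$, solving $h=0$ gives $1+\gamma x=(\gamma/(1+\gamma))^{\gamma}$, which is not $(1+\gamma)^{-\gamma}$, so the ``i.e.'' step by which you recover $x_\gamma^\ast$ only works for the corrected factor. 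The same slip infects case 1: for $\gamma\le-1$ the relevant fact is $1+\gamma\le 0$ (not ``$1+1/\gamma\le 0$''---in fact $1+1/\gamma\in[0,1)$ there), which with the correct $h$ gives $h\ge 1>0$ on the whole domain and hence strict increase; with your $h$ and the true sign of $1+1/\gamma$, $h$ would have a zero for $\gamma<-1$, contradicting the very statement being proved.

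The endpoint analysis in the cases $\gamma\le-1$ and $-1<\gamma<0$ also fails as written. For $\gamma<0$ and $x\to-\infty$ one has $1+\gamma x\to+\infty$ and $-1/\gamma>0$, so $u\to+\infty$ (not $0$), and for $\gamma<-1$ the term $-(1+1/\gamma)\log(1+\gamma x)\to-\infty$ (not $+\infty$); both terms tend to $-\infty$, which is how the limit at $x_\gamma^-$ is obtained, while the claimed limit $+\infty$ at $x_\gamma^+=-1/\gamma$ for $\gamma<-1$ (coming from $-(1+1/\gamma)\log(1+\gamma x)\to+\infty$ while $u\to 0$) is never actually verified. At $\gamma=-1$ the exponent is $-1/\gamma=+1$, so $\ell_{-1}(x)=-(1-x)=x-1$, not $-(1-x)^{-1}$; your formula would give limit $-\infty$ at $x=1$ and $0$ at $-\infty$, i.e.\ the opposite of the statement. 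Finally, for $-1<\gamma<0$ one has $1+1/\gamma<0$, so at $x\to-\infty$ the log term tends to $+\infty$ and the limit $-\infty$ must come from $-(1+\gamma x)^{-1/\gamma}$ dominating (a power beats a logarithm); the ``symmetric'' argument invoking $1+1/\gamma>0$ is therefore incorrect as stated. All of this is fixable with the corrected constant $1+\gamma$ and careful sign bookkeeping---precisely the point you yourself flagged as the only delicate one---but as written the derivation is internally inconsistent, and in the cases $\gamma\le-1$ and $-1<\gamma<0$ the stated conclusions are asserted rather than established.
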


\begin{remark}\label{rk3}{\rm
According to Proposition \ref{prop:GEV}, the log-likelihood $\ell_{\gamma}$ has no local maximum in the case $\gamma\leq -1$. This entails that the log-likelihood Equation \eqref{eq:likeq} has no local maximum  in $(-\infty,-1]\times\bbR\times (0,+\infty)$ and that any MLE $(\widehat\gamma_n,\widehat \mu_n,\widehat\sigma_n)$ satisfies $\widehat\gamma_n>-1$. Hence, no consistent MLE does exist if $\gamma_0<-1$. The limit case $\gamma_0=-1$ is more difficult to analyze and is disregarded in this paper.}
\end{remark}

\subsection{Normalized block maxima}
In view of Equation \eqref{eq:convBM}, we define the normalized block maxima
\[
\widetilde M_{k,m}=\frac{M_{k,m}-b_m}{a_m},\quad k\geq 1, 
\]
and the corresponding likelihood
\[
\widetilde L_n(\gamma,\sigma,\mu)=\frac{1}{n}\sum_{k=1}^n  \ell_{(\gamma,\mu,\sigma)}(\widetilde M_{k,m(n)}).
\]
It should be stressed that the normalization sequences $(a_m)$ and $(b_m)$ are unknown so that the normalized block maxima $\widetilde M_{k,m}$ and the likelihood $\widetilde L_n$ cannot be computed  from the data only. However, they will be useful in our theoretical analysis since they have  good asymptotic properties. The following simple observation will be useful. 
\begin{lemma}\label{lem:basic}
$(\widehat \gamma_n,\widehat \mu_n,\widehat\sigma_n)$ is a MLE if and only if $\widetilde L_n$ has a local maximum at $(\widehat \gamma_n, (\widehat\mu_n-b_m)/a_m,\widehat\sigma_n/a_m)$.
\end{lemma}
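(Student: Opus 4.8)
The plan is to exploit the exact affine relationship between the raw block maxima $M_{k,m}$ and the normalized block maxima $\widetilde M_{k,m}$, namely $M_{k,m}=a_m\widetilde M_{k,m}+b_m$, together with the way the three-parameter GEV log-likelihood $\ell_{(\gamma,\mu,\sigma)}$ transforms under an affine change of the observation. The key identity I would establish first is a reparametrization formula: for any $a>0$ and $b\in\bbR$,
\[
\ell_{(\gamma,\mu,\sigma)}(ay+b)=\ell_{(\gamma,(\mu-b)/a,\sigma/a)}(y)-\log a.
\]
This follows directly from the definition $\ell_{(\gamma,\mu,\sigma)}(x)=\ell_\gamma((x-\mu)/\sigma)-\log\sigma$, since $(ay+b-\mu)/\sigma=(y-(\mu-b)/a)/(\sigma/a)$ and $\log\sigma=\log(\sigma/a)+\log a$.

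Summing this identity over $k=1,\dots,n$ with $y=\widetilde M_{k,m}$, $a=a_m$, $b=b_m$, I obtain the relation between the two likelihoods:
\[
L_n(\gamma,\mu,\sigma)=\widetilde L_n\!\left(\gamma,\frac{\mu-b_m}{a_m},\frac{\sigma}{a_m}\right)-\log a_m.
\]
In other words, $L_n=\widetilde L_n\circ\Phi-\log a_m$, where $\Phi(\gamma,\mu,\sigma)=(\gamma,(\mu-b_m)/a_m,\sigma/a_m)$ is a smooth bijection of the parameter space $\bbR\times\bbR\times(0,+\infty)$ onto itself (its inverse being $(\gamma,\mu',\sigma')\mapsto(\gamma,a_m\mu'+b_m,a_m\sigma')$), and $-\log a_m$ is a constant not depending on the parameters.

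Since $\Phi$ is a diffeomorphism and the additive constant $-\log a_m$ does not affect the location of extrema, $L_n$ has a local maximum at a point $(\widehat\gamma_n,\widehat\mu_n,\widehat\sigma_n)$ if and only if $\widetilde L_n$ has a local maximum at $\Phi(\widehat\gamma_n,\widehat\mu_n,\widehat\sigma_n)=(\widehat\gamma_n,(\widehat\mu_n-b_m)/a_m,\widehat\sigma_n/a_m)$. By the definition of a MLE given before \eqref{eq:likeq}, the left-hand statement is exactly the assertion that $(\widehat\gamma_n,\widehat\mu_n,\widehat\sigma_n)$ is a MLE, which gives the claimed equivalence. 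There is no real obstacle here; the only point requiring a word of care is that the equivalence of "local maximum" under $\Phi$ is a purely topological fact (homeomorphisms preserve local maxima of functions) and does not require differentiability, so the argument applies verbatim even at parameter values where $\widetilde L_n$ takes the value $-\infty$ in a neighborhood or fails to be differentiable.
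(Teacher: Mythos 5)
Your proposal is correct and follows essentially the same route as the paper: you derive the same scaling identity for the GEV log-likelihood, obtain the relation $L_n(\gamma,\mu,\sigma)=\widetilde L_n\bigl(\gamma,(\mu-b_m)/a_m,\sigma/a_m\bigr)-\log a_m$, and conclude that local maximizers of $L_n$ and $\widetilde L_n$ correspond under the affine reparametrization. Your extra remark that the correspondence of local maxima is a purely topological fact is a harmless (and correct) elaboration of the paper's one-line conclusion.
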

\begin{proof}
The GEV likelihood satisfies the scaling property
\[
 \ell_{\gamma,\mu,\sigma}((x-b)/a)=\ell_{(\gamma,a\mu+b,a\sigma)}(x)+\log a
\]
so that
\[
 L_n(\gamma,\mu,\sigma)= \widetilde L_n\Big(\gamma,\frac{\mu-b_m}{a_m},\frac{\sigma}{a_m}\Big)-\log a_m.
\]
Hence the local maximizers of $L_n$ and  $\widetilde L_n$ are in direct correspondence and the lemma follows.
\end{proof}

\subsection{Empirical distributions}
The likelihood function $\tilde L_n$ can be seen as a functional of the empirical distribution  defined by
\[
\bbP_n=\frac{1}{n}\sum_{k=1}^n \delta_{\widetilde M_{k,m}},
\]
where $\delta_x$ denotes the Dirac mass at point $x\in\bbR$. For any measurable $f:\bbR\to [-\infty,+\infty)$, we note $\bbP_n[f]$ the integral with respect to $\bbP_n$, i.e.
\[
\bbP_n[f]=\frac{1}{n}\sum_{k=1}^n f(\widetilde M_{k,m}).
\]
With these notations, it holds
\[
 \widetilde L_n(\gamma,\mu,\sigma)=\bbP_n[\ell_{(\gamma,\mu,\sigma)}].
\]
The empirical process is defined by 
\[
\bbF_n(t)=\bbP_n((-\infty,t])=\frac{1}{n}\sum_{k=1}^n 1_{\{\widetilde M_{k,m}\leq t\}},\quad t\in\bbR.
\]
In the case of an i.i.d. sequence, the Glivenko-Cantelli Theorem  states that the empirical process converges almost surely uniformly to the sample distribution function. According to the general theory of empirical processes (see {\it e.g.} Shorack and Wellner \cite{SW86} Theorem 1, p106), this result can be extended to triangular arrays of i.i.d. random variables. Equation \eqref{eq:convBM} entails the following result.
\begin{lemma}\label{prop1}
Suppose $F\in D(G_{\gamma_0})$ and $\lim_{n\to+\infty}m(n)=+\infty$. Then,
\[
\sup_{t\in\bbR}|\bbF_n(t) -F_{\gamma_0}(t)| \stackrel{a.s.}\longrightarrow 0 \quad \mbox{as}\ n\to +\infty.
\]
\end{lemma}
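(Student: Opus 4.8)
The plan is to reduce the statement to a Glivenko–Cantelli-type result for triangular arrays and then combine it with the weak convergence \eqref{eq:convBM}. First I would recall that for each fixed $n$ the variables $\widetilde M_{1,m(n)},\ldots,\widetilde M_{n,m(n)}$ are i.i.d.\ with common distribution function $F_n(t):=\bbP[\widetilde M_{1,m(n)}\le t]=F^{m(n)}(a_{m(n)}t+b_{m(n)})$, so $\bbF_n$ is the empirical distribution function of an i.i.d.\ $n$-sample drawn from $F_n$. The triangular-array version of the Glivenko–Cantelli theorem (Shorack and Wellner \cite{SW86}, Theorem 1, p.~106) then gives $\sup_{t\in\bbR}|\bbF_n(t)-F_n(t)|\stackrel{a.s.}\longrightarrow 0$, since the sample size $n$ tends to infinity along the array. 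This handles the ``statistical'' fluctuation term.

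It remains to control the ``bias'' term $\sup_{t\in\bbR}|F_n(t)-F_{\gamma_0}(t)|$, which is a deterministic quantity. By \eqref{eq:convBM} we have $F_n(t)\to F_{\gamma_0}(t)$ pointwise for every $t\in\bbR$ as $n\to\infty$ (using $m(n)\to\infty$). The key extra ingredient is that $F_{\gamma_0}$ is a \emph{continuous} distribution function on $\bbR$ (it is continuous everywhere, being identically $0$ below $x_{\gamma_0}^-$ and identically $1$ above $x_{\gamma_0}^+$ when $\gamma_0\neq 0$, and smooth in between). Pointwise convergence of distribution functions to a continuous limit distribution function upgrades automatically to uniform convergence — this is the classical Pólya-type strengthening of weak convergence. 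Concretely, one fixes $\varepsilon>0$, chooses a finite grid $-\infty=t_0<t_1<\cdots<t_J=+\infty$ with $F_{\gamma_0}(t_{j})-F_{\gamma_0}(t_{j-1})<\varepsilon$ for all $j$ (possible by continuity), uses monotonicity of both $F_n$ and $F_{\gamma_0}$ to sandwich $F_n(t)-F_{\gamma_0}(t)$ between $F_n(t_{j-1})-F_{\gamma_0}(t_j)$ and $F_n(t_j)-F_{\gamma_0}(t_{j-1})$ for $t\in[t_{j-1},t_j]$, and then invokes the pointwise convergence at the finitely many grid points $t_1,\ldots,t_{J-1}$ to conclude that $\sup_t|F_n(t)-F_{\gamma_0}(t)|\le 2\varepsilon$ for $n$ large.

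Combining the two bounds via the triangle inequality,
\[
\sup_{t\in\bbR}|\bbF_n(t)-F_{\gamma_0}(t)|\le \sup_{t\in\bbR}|\bbF_n(t)-F_n(t)|+\sup_{t\in\bbR}|F_n(t)-F_{\gamma_0}(t)|,
\]
the first term going to $0$ almost surely and the second deterministically, yields the claim. I expect the main obstacle to be purely bookkeeping: making sure the cited triangular-array Glivenko–Cantelli statement applies verbatim in our setting (row-wise i.i.d., sample size growing), and writing the Pólya argument cleanly with the infinite endpoints $t_0=-\infty$, $t_J=+\infty$ handled correctly (there $F_n$ and $F_{\gamma_0}$ agree in the limit trivially since both tend to $0$ or $1$). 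No regular-variation input is needed here — only \eqref{eq:convBM}, the continuity of $F_{\gamma_0}$, and the array Glivenko–Cantelli theorem.
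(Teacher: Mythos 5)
Your proposal is correct and follows essentially the same route as the paper, which proves the lemma by exactly this combination: the triangular-array Glivenko--Cantelli theorem (Shorack and Wellner, Theorem 1, p.~106) to handle $\sup_t|\bbF_n(t)-F^{m}(a_m t+b_m)|$, and the weak convergence \eqref{eq:convBM} together with the continuity of $F_{\gamma_0}$ (P\'olya's theorem) to upgrade the bias term to uniform convergence. Your write-up merely spells out the details that the paper leaves as a citation-plus-sketch, so there is nothing to correct.
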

\noindent
This entails the almost surely weak convergence $\bbP_n\Rightarrow G_{\gamma_0}$, whence 
\[
\bbP_n[f]\stackrel{a.s}\longrightarrow G_{\gamma_0}[f] \quad \mbox{as}\ n\to +\infty
\]
for all bounded and continuous function $f:\bbR\to\bbR$. The following lemma dealing with more general functions will be useful.
\begin{lemma}\label{lem:Fatou}Suppose $F\in D(G_{\gamma_0})$ and $\lim_{n\to+\infty}m(n)=+\infty$. Then, for all upper semi-continuous  function $f:\bbR\to[-\infty,+\infty)$  bounded from above,
\[
\limsup_{n\to+\infty} \bbP_n[f] \leq G_{\gamma_0}[f] \quad \mbox{a.s.}.
\] 
\end{lemma}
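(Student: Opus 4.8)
The plan is to reduce the claim to the almost sure weak convergence $\bbP_n\Rightarrow G_{\gamma_0}$ already recorded above (a consequence of Lemma~\ref{prop1}), by approximating the upper semi-continuous function $f$ from above by bounded Lipschitz functions and then removing the lack of a lower bound by a further truncation. Fix $M$ with $f\le M$ on $\bbR$; if $f\equiv-\infty$ the statement is trivial, so assume not. For $j\ge 1$, introduce the sup-convolution regularization
\[
f_j(x)=\sup_{y\in\bbR}\bigl\{f(y)-j|x-y|\bigr\}.
\]
The routine facts to be checked are: each $f_j$ is real-valued and $j$-Lipschitz, $f_j\le M$, the sequence $(f_j)_{j\ge1}$ is non-increasing, and $f_j(x)\downarrow f(x)$ for every $x$ — this last point being precisely where upper semi-continuity of $f$ is used (and it also covers the values where $f(x)=-\infty$).

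Next, for $L>0$ the function $f_j\vee(-L)$ is bounded and continuous, so by the a.s.\ weak convergence $\bbP_n\Rightarrow G_{\gamma_0}$, on a single event of probability one we have $\bbP_n[f_j\vee(-L)]\to G_{\gamma_0}[f_j\vee(-L)]$ simultaneously for all $j,L$. Since $f\le f_j\le f_j\vee(-L)$, on that event
\[
\limsup_{n\to\infty}\bbP_n[f]\le\limsup_{n\to\infty}\bbP_n[f_j\vee(-L)]=G_{\gamma_0}[f_j\vee(-L)].
\]
Then two monotone passages to the limit finish the proof: as $L\to\infty$, $f_j\vee(-L)\downarrow f_j$ with $f_j\le M$, so $M-f_j\vee(-L)\uparrow M-f_j\ge0$ and monotone convergence gives $G_{\gamma_0}[f_j\vee(-L)]\downarrow G_{\gamma_0}[f_j]$, hence $\limsup_n\bbP_n[f]\le G_{\gamma_0}[f_j]$; and as $j\to\infty$, $f_j\downarrow f$ with $f_j\le M$, so monotone convergence again yields $G_{\gamma_0}[f_j]\downarrow G_{\gamma_0}[f]$ (possibly $-\infty$), giving $\limsup_n\bbP_n[f]\le G_{\gamma_0}[f]$ on the probability-one event.

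I do not anticipate a genuine obstacle here; the proof is essentially the standard portmanteau argument adapted to the present triangular-array setting. The only points requiring care are organizational: ensuring the exceptional null set is independent of $f$ (it is, since $\bbP_n\Rightarrow G_{\gamma_0}$ holds on one event valid for all bounded continuous test functions at once), keeping the direction of monotonicity straight so that monotone convergence applies with the correct sign, and noting that the degenerate case $G_{\gamma_0}[f]=-\infty$ is automatic because the bound $\limsup_n\bbP_n[f]\le G_{\gamma_0}[f_j]$ holds for every $j$.
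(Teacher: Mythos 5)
Your proof is correct, but it follows a genuinely different route from the paper. You establish the upper semi-continuous portmanteau inequality by approximation from above: the sup-convolutions $f_j(x)=\sup_y\{f(y)-j|x-y|\}$ are Lipschitz, decrease pointwise to $f$ by upper semi-continuity, and after the further truncation $f_j\vee(-L)$ you can invoke the almost sure weak convergence $\bbP_n\Rightarrow G_{\gamma_0}$ (valid on a single event for all bounded continuous test functions, as you note), then pass to the limit in $L$ and $j$ by monotone convergence applied to $M-f_j\vee(-L)$ and $M-f_j$. All the steps you flag as routine do check out, including the degenerate cases $f\equiv-\infty$ and $G_{\gamma_0}[f]=-\infty$. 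The paper instead works with the complementary function $\tilde f=M-f$, which is non-negative and lower semi-continuous, and uses the quantile representation $\bbP_n[\tilde f]=\int_0^1\tilde f(\bbF_n^\leftarrow(u))\,du$: Lemma~\ref{prop1} plus the continuity of $F_{\gamma_0}$ give $\bbF_n^\leftarrow(u)\to F_{\gamma_0}^\leftarrow(u)$ for every $u\in(0,1)$ almost surely, and then lower semi-continuity and Fatou's lemma yield $\liminf_n\bbP_n[\tilde f]\geq G_{\gamma_0}[\tilde f]$. Both arguments rest ultimately on the Glivenko--Cantelli statement of Lemma~\ref{prop1}; yours buys a self-contained portmanteau-style argument that never needs quantile functions or their pointwise convergence, at the cost of setting up the Lipschitz regularization and a double limit, while the paper's coupling-plus-Fatou argument is shorter and handles the unbounded-below behaviour in one stroke, at the cost of the (mild) verification that the empirical quantile functions converge pointwise to $F_{\gamma_0}^\leftarrow$.
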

\begin{proof}[Proof of Lemma \ref{lem:Fatou}]
Let $M$ be an upper bound for $f$. The function $\tilde f=M-f$ is non-negative and lower semicontinuous. Clearly,
\[
 \bbP_n[f]=M-\bbP_n[\tilde f] \quad \mbox{and}\quad G_{\gamma_0}[f]=M-G_{\gamma_0}[\tilde f],
\]
whence it is enough to prove that
\[
\liminf_{n\to+\infty} \bbP_n[\tilde f] \geq G_{\gamma_0}[\tilde f] \quad \mbox{a.s.}.
\] 
To see this, we use the relation
\[
 \bbP_n[\tilde f]=\int_0^1 \tilde f(\bbF_n^\leftarrow(u))du.
\]
where $\bbF_n^\leftarrow$ is the left-continuous  inverse function
\[
 \bbF_n^\leftarrow=\inf\{x\in \bbR;\ \bbF_n(x)\geq u\},\quad u\in (0,1).
\]
Lemma \ref{prop1} together with the continuity of the distribution function $F_{\gamma_0}$ entail that almost surely, 
$ \bbF_n^\leftarrow(u)\to F_{\gamma_0}^\leftarrow(u)$ for all $u\in(0,1)$ as $n\to +\infty$. Using the fact that $\tilde f$ is lower semi-continuous, we obtain
\[
 \liminf_{n\to +\infty} \tilde f(\bbF_n^\leftarrow(u))\geq \tilde f (F_{\gamma_0}^\leftarrow(u)) \quad u\in(0,1).
\]
On the other hand, according to Fatou's lemma,
\[
 \liminf_{n\to+\infty}\int_0^1 \tilde f(\bbF_n^\leftarrow(u))du \geq \int_0^1 \liminf_{n\to+\infty}\tilde f(\bbF_n^\leftarrow(u))du.
\]
Combining the two inequalities, we obtain
\[
 \liminf_{n\to +\infty} \tilde f(\bbF_n^\leftarrow(u))\geq \int_0^1\tilde f (F_{\gamma_0}^\leftarrow(u))du,
\]
whence 
\[
\liminf_{n\to+\infty} \bbP_n[\tilde f] \geq G_{\gamma_0}[\tilde f] \quad \mbox{a.s.}.
\] 
\end{proof}
The next lemma plays a crucial role in our proof of Theorem~\ref{theo1}. Its proof is quite technical and is postponed to an appendix. 
\begin{lemma}\label{lem:crucial}
Suppose $F\in  D(G_{\gamma_0})$ with  $\gamma_0>-1$  and assume  condition \eqref{eq:growth} is satisfied. Then,
\begin{equation}\label{eq:crucial}
\lim_{n\to +\infty}\bbP_n[\ell_{\gamma_0}]=G_{\gamma_0}[\ell_{\gamma_0}]\quad \mbox{a.s.}.
\end{equation}
\end{lemma}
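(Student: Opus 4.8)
The statement to prove is $\bbP_n[\ell_{\gamma_0}] \to G_{\gamma_0}[\ell_{\gamma_0}]$ a.s. Let me think about what's at stake. We know from Lemma \ref{lem:Fatou} (with $f = \ell_{\gamma_0}$, which is upper semi-continuous and bounded above by Proposition \ref{prop:GEV}, since $\gamma_0 > -1$) that $\limsup_n \bbP_n[\ell_{\gamma_0}] \le G_{\gamma_0}[\ell_{\gamma_0}]$ automatically. So the whole content of the lemma is the matching lower bound on the $\liminf$. The function $\ell_{\gamma_0}$ tends to $-\infty$ at both endpoints $x_{\gamma_0}^-$ and $x_{\gamma_0}^+$, so the danger is that a small fraction of the normalized block maxima $\widetilde M_{k,m}$ stray near the boundary of the domain (or outside it, where $\ell_{\gamma_0} = -\infty$) and drag the average down. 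Controlling this is exactly where condition \eqref{eq:growth} must enter.

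**Main steps.** First I would split $\ell_{\gamma_0} = \ell_{\gamma_0}^+ - \ell_{\gamma_0}^-$ into positive and negative parts, or more conveniently, for a truncation level $A>0$ write $\ell_{\gamma_0} = (\ell_{\gamma_0} \vee (-A)) + (\ell_{\gamma_0} - (\ell_{\gamma_0}\vee(-A)))$. The truncated part $\ell_{\gamma_0}\vee(-A)$ is bounded and upper semi-continuous; applying Lemma \ref{lem:Fatou} to it and to its negative (which is continuous and bounded, hence covered by the weak convergence $\bbP_n \Rightarrow G_{\gamma_0}$ noted after Lemma \ref{prop1}) gives $\bbP_n[\ell_{\gamma_0}\vee(-A)] \to G_{\gamma_0}[\ell_{\gamma_0}\vee(-A)]$ a.s., and as $A\to\infty$ the right side increases to $G_{\gamma_0}[\ell_{\gamma_0}]$ by monotone convergence. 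So it remains to show the remainder is asymptotically negligible, i.e.
\[
\lim_{A\to\infty}\limsup_{n\to\infty}\ \bbP_n\big[(\ell_{\gamma_0}+A)\wedge 0 \cdot \mathbf{1}_{\{\ell_{\gamma_0}<-A\}}\big] = 0 \quad\text{a.s.},
\]
or equivalently that the contribution of the block maxima landing in the region $\{\ell_{\gamma_0} < -A\}$ vanishes. Concretely, the region $\{\ell_{\gamma_0}(x) < -A\}$ is a union of two intervals: a left piece near $x_{\gamma_0}^-$ and a right piece near $x_{\gamma_0}^+$ (or just one infinite tail in the $\gamma_0 = 0$ case, and one semi-infinite tail plus one interval near $-1/\gamma_0$ otherwise). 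I would estimate $\bbP_n$ of each piece separately using the known rate at which $\ell_{\gamma_0}(x) \to -\infty$: near the finite endpoint $\ell_{\gamma_0}$ behaves like $(1+\gamma_0 x)^{-1/\gamma_0}$ or $-\log(1+\gamma_0 x)$ up to constants, and near $+\infty$ (when $\gamma_0 \ge 0$) like $-x/\gamma_0$ or $-\log x$. So I need: (i) an upper bound on the number of $k \le n$ with $\widetilde M_{k,m}$ exceeding a large threshold $T$ (the right tail), and $\ell_{\gamma_0}$ there is of order $-T$ or $-\log T$; (ii) an upper bound on the number of $k \le n$ with $\widetilde M_{k,m}$ within $\epsilon$ of the left endpoint of the domain (or below it), where $\ell_{\gamma_0}$ can be as negative as $-\epsilon^{-1/|\gamma_0|}$ or even $-\infty$.

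**Where condition \eqref{eq:growth} bites.** The delicate piece is (ii): we must rule out that *any* of the first $n$ normalized block maxima falls below $x_{\gamma_0}^- $ (giving $\ell_{\gamma_0} = -\infty$), and more quantitatively that not too many fall in a small left-neighbourhood. Since $\widetilde M_{k,m} = a_m^{-1}(\max(X_{(k-1)m+1},\dots,X_{km}) - b_m)$ and each $X_i$ has law $F$, the event $\widetilde M_{k,m} \le a_m^{-1}(a_m s + b_m)$ — say $\widetilde M_{k,m}$ small — has probability $F^m(a_m s + b_m)$, which is approximately $F_{\gamma_0}(s) = \exp(-(1+\gamma_0 s)^{-1/\gamma_0})$, and this is super-exponentially small as $s \downarrow x_{\gamma_0}^-$. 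A union bound over $k = 1,\dots,n$ and a Borel–Cantelli argument then require this probability, raised appropriately, to be summable — and translating the approximation $F^m(a_m s + b_m) \approx F_{\gamma_0}(s)$ into a genuine inequality uniformly in $m$ is where regular-variation estimates on $U$ (Theorem \ref{theo:DA}) and the precise rate condition $m(n)/\log n \to \infty$ are needed: one wants $n \cdot F^m(a_m s_n + b_m) \to 0$ for a suitable slowly-shrinking sequence $s_n \downarrow x_{\gamma_0}^-$, and the factor $n = e^{\log n}$ is absorbed precisely because $m$ dominates $\log n$. This is the main obstacle, and I expect it to be exactly the technical heart deferred to the Appendix: establishing, via Potter-type bounds and the representation of $a(t)$, a quantitative lower bound $1 + \gamma_0 \widetilde M_{k,m} \ge \delta_n$ holding simultaneously for all $k \le n$ eventually almost surely, with $\delta_n \to 0$ slowly enough that $\ell_{\gamma_0}(\widetilde M_{k,m}) \ge -o(n)/\,$(something) — more precisely so that the total negative contribution $\frac1n\sum_k \ell_{\gamma_0}(\widetilde M_{k,m})\mathbf 1_{\{\cdot\text{ near boundary}\}}$ vanishes. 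The right-tail piece (i) is analogous but easier, since $\ell_{\gamma_0}$ decays only linearly or logarithmically there, so a crude first-moment bound on $\#\{k\le n: \widetilde M_{k,m} > T_n\}$ with $T_n \to \infty$ slowly suffices. Assembling (i) and (ii) gives the negligibility of the remainder, and combined with the truncated convergence and Lemma \ref{lem:Fatou} this yields \eqref{eq:crucial}.
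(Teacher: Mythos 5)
Your reduction is sound as far as it goes: the $\limsup$ half is indeed free from Lemma \ref{lem:Fatou}, the truncation $\ell_{\gamma_0}\vee(-A)$ is bounded and continuous so the Glivenko--Cantelli lemma handles it, and you have correctly located both the real difficulty (the mass of $\bbP_n$ near the boundary of the support, where $\ell_{\gamma_0}$ is unbounded below or $-\infty$) and the place where condition \eqref{eq:growth} must enter (killing the far left tail, since $n\,F^m(a_m s+b_m)\le \exp(\log n-cm)\to 0$ summably precisely when $m\gg\log n$). But the proposal stops exactly where the lemma's actual content begins: the negligibility of the remainder is announced (``I expect it to be exactly the technical heart'') rather than proved. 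Moreover, the one concrete mechanism you do propose is insufficient as stated. A simultaneous lower bound $1+\gamma_0\widetilde M_{k,m}\ge\delta_n$ for all $k\le n$, obtained by a union bound and Borel--Cantelli, cannot hold with $\delta_n$ better than roughly $(\log n)^{-\gamma_0}$ (for $\gamma_0>0$), so individual bad terms may still be of size $\log n$; to conclude that $\frac1n\sum_k|\ell_{\gamma_0}(\widetilde M_{k,m})|\,1_{\{\ell_{\gamma_0}<-A\}}$ is small you additionally need to control \emph{how many} of the $n$ maxima sit at each depth, with bounds on $F^m(a_mx+b_m)$ in the intermediate left range that are uniform in $m$. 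The tools established in Section 2 do not give this: Lemma \ref{prop1} only controls empirical counts up to $o(n)$, which is useless against integrands diverging like $\delta_n^{-1/\gamma_0}$. So either a layered (shell-by-shell) Borel--Cantelli argument with quantitative, $m$-uniform tail estimates, or a moment argument, still has to be supplied --- and that is the part of the proof you have not written.

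For comparison, the paper does not argue via fixed-level truncation plus exceedance counting at all. It first proves a general criterion (Lemma in the Appendix): with $Y_m=\ell_{\gamma_0}(a_m^{-1}(M_{1,m}-b_m))$, if for some $p>2$ and thresholds $\alpha_n$ one has $\sum_n n\,\bbP(|Y_m|>\alpha_n)<\infty$ and $\sup_n\bbE[|Y_m|^p1_{\{|Y_m|\le\alpha_n\}}]<\infty$, then \eqref{eq:crucial} holds; the proof combines Borel--Cantelli with a Rosenthal-type moment inequality (Petrov) for the truncated row sums. It then verifies these two conditions by writing $\bbE[(|Y_m|\wedge\alpha_n)^p]$ as an explicit integral via $M_{1,m}\stackrel{d}{=}F^\leftarrow(V_m)$ and $U$, splitting it into three pieces ($I_1$ for the far left range $x\in(1/m,m_0/m)$, $I_2$ for the moderate left range, $I_3$ for the right range), bounding $I_2,I_3$ uniformly in $m$ by Potter-type bounds in the three cases $\gamma_0>0$, $\gamma_0<0$, $\gamma_0=0$, and choosing $\alpha_n=\exp(m/(2pm_0))$ so that $I_1$ stays bounded while $\sum_n n\alpha_n^{-p}<\infty$ exactly under \eqref{eq:growth}. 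Your sketch correctly anticipates the ingredients (Potter bounds, the role of $m/\log n\to\infty$), and your route could in principle be completed, but as submitted it is an outline with the decisive estimates missing, not a proof.
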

\noindent
It shall be stressed that Lemma \ref{lem:crucial} is the only part in the proof of Theorem \ref{theo1} where  condition \eqref{eq:growth} is needed (see Remark \ref{rk2}).

\section{Proof of Theorem \ref{theo1}}
We introduce the short notation $\Theta=(-1,+\infty)\times\bbR\times (0,+\infty)$. A generic point of $\Theta$ is noted $\theta=(\gamma,\mu,\sigma)$. 

The restriction  $\widetilde L_n:\Theta\to [-\infty,+\infty)$ is continuous, so that for any compact $K\subset \Theta$, $\widetilde L_n$ is bounded and reaches its maximum on $K$. We can thus define $\widetilde \theta_n^K=(\widetilde \gamma_n^K,\widetilde \mu_n^K,\widetilde \sigma_n^K)$ such that
\begin{equation}\label{eq:tildethetaK}
\widetilde \theta_n^K=\argmax_{\theta\in K}\, \widetilde L_n(\theta).
\end{equation}
The following proposition is the key in the proof of Theorem~\ref{theo1}.
\begin{proposition}\label{prop:wald}  Let $\theta_0=(\gamma_0,0,1)$ and $K\subset\Theta$ be a compact neighborhood of $\theta_0$. Under the assumptions of Theorem \ref{theo1}, 
\[
\lim_{n\to+\infty}\widetilde \theta_n^K=\theta_0\quad a.s..
\]
\end{proposition}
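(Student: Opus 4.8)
The plan is to adapt Wald's classical argument for consistency of $M$-estimators to the present non-i.i.d. situation, replacing the law of large numbers by the tools assembled in Section~2. Write $\theta_0 = (\gamma_0, 0, 1)$ and recall that, since $\theta_0$ is the true parameter of the limiting GEV law, the function $\theta \mapsto G_{\gamma_0}[\ell_\theta]$ is the Kullback--Leibler-type contrast which, by the information inequality, is uniquely maximized at $\theta_0$ with value $G_{\gamma_0}[\ell_{\theta_0}]$; this finiteness and uniqueness should be recorded first (the integrability of $\ell_{\theta_0}$ under $G_{\gamma_0}$ follows from Proposition~\ref{prop:GEV} and an elementary estimate, and strict concavity/identifiability of the GEV family gives uniqueness of the maximizer).

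Next I would localize: fix $\varepsilon>0$ small enough that $B = \{\theta \in K : |\theta - \theta_0| \le \varepsilon\}$ is contained in the interior of $K$, and aim to show that almost surely $\widetilde\theta_n^K \in B$ for all large $n$. For this, cover the compact set $K \setminus B$ and argue that $\sup_{\theta \in K\setminus B} \widetilde L_n(\theta)$ is eventually strictly below $\widetilde L_n(\theta_0)$. The upper bound on the supremum comes from Lemma~\ref{lem:Fatou}: one covers $K\setminus B$ by finitely many small balls, on each of which the function $\theta \mapsto \ell_\theta(x)$ is dominated by an upper semicontinuous, bounded-above function (here the uniform structure of $\ell_\gamma$ from Proposition~\ref{prop:GEV}, in particular the uniform upper bound $\ell_\gamma(x_\gamma^\ast) = (1+\gamma)(\log(1+\gamma)-1)$, is what keeps things bounded above), so that $\limsup_n \widetilde L_n$ on each ball is at most the $G_{\gamma_0}$-integral of that dominating function; shrinking the balls and using upper semicontinuity of $\theta \mapsto G_{\gamma_0}[\ell_\theta]$ makes each such integral $< G_{\gamma_0}[\ell_{\theta_0}]$. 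The matching lower bound $\liminf_n \widetilde L_n(\theta_0) \ge G_{\gamma_0}[\ell_{\theta_0}]$ is exactly where the work is concentrated and is supplied by Lemma~\ref{lem:crucial}, which under condition~\eqref{eq:growth} gives $\bbP_n[\ell_{\gamma_0}] \to G_{\gamma_0}[\ell_{\gamma_0}]$ almost surely; since $\widetilde L_n(\theta_0) = \bbP_n[\ell_{\gamma_0}]$, this closes the gap. Combining, almost surely $\widetilde L_n(\theta_0) > \sup_{K\setminus B}\widetilde L_n$ for $n$ large, forcing $\widetilde\theta_n^K \in B$; letting $\varepsilon \to 0$ along a countable sequence yields $\widetilde\theta_n^K \to \theta_0$ a.s.

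The main obstacle is the upper-semicontinuity/domination bookkeeping needed to apply Lemma~\ref{lem:Fatou} uniformly over a neighborhood: one must exhibit, for each $\theta' \in K$ and each radius $r>0$, a single function $x \mapsto \sup_{|\theta-\theta'|\le r} \ell_\theta(x)$ that is bounded above (uniformly in $x$) and upper semicontinuous in $x$, and then check that its $G_{\gamma_0}$-integral decreases to $G_{\gamma_0}[\ell_{\theta'}]$ as $r \downarrow 0$ (monotone convergence plus upper semicontinuity of the supremum). Care is required near the boundary of the domain of $\ell_\gamma$, where $\ell_\gamma(x) \to -\infty$; but since we only need an \emph{upper} bound on $\limsup \widetilde L_n$ there, the blow-up to $-\infty$ is harmless and in fact helps. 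The only genuinely delicate input — controlling $\bbP_n[\ell_{\gamma_0}]$ from below, i.e. ensuring the normalized block minima do not drift to the left endpoint of the domain too fast — has been isolated into Lemma~\ref{lem:crucial} and its appendix proof, so the present proposition reduces to assembling these pieces via the compactness argument above.
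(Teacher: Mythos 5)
Your proposal is correct and follows essentially the same route as the paper: Wald's argument built on the Kullback--Leibler inequality and the shrinking-ball integral limits (the paper's Lemmas \ref{lem:wald1} and \ref{lem:wald2}), the upper bound over the complement of a neighborhood of $\theta_0$ via a finite subcover and Lemma \ref{lem:Fatou}, and the matching lower bound at $\theta_0$ supplied by Lemma \ref{lem:crucial}. The bookkeeping you flag (upper semicontinuity and a uniform upper bound for $x\mapsto\sup_{\theta\in B}\ell_\theta(x)$, and convergence of its $G_{\gamma_0}$-integral as the radius shrinks) is precisely what the paper isolates in Lemma \ref{lem:wald2}.
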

The proof of Proposition \ref{prop:wald} relies on  an adaptation of  Wald's method for proving the consistency of  $M$-estimators (see Wald \cite{W49} or van der Vaart \cite{vdV98}  Theorem 5.14). The standard theory of $M$-estimation is designed for i.i.d. samples, while  we have to deal with the triangular array $\{(\widetilde M_{k,m})_{1\leq k\leq n}, n\geq 1\}$. We first state  two lemmas. 
\begin{lemma}\label{lem:wald1}
For all $\theta\in\Theta$, $G_{\gamma_0}[\ell_\theta]\leq G_{\gamma_0}[\ell_{\theta_0}]$ and the equality holds if and only if $\theta=\theta_0$.
\end{lemma}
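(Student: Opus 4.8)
The plan is to read Lemma~\ref{lem:wald1} as the classical fact that the Kullback--Leibler divergence is nonnegative (Gibbs' inequality), specialized to the generalized extreme value family, together with the identifiability of that family. Throughout, $\ell_\theta=\ell_{(\gamma,\mu,\sigma)}$ is the logarithm of the density $g_\theta=\exp(\ell_\theta)$ of the GEV distribution $G_\theta$ whose distribution function is $x\mapsto F_\gamma((x-\mu)/\sigma)$, with the convention $\log 0=-\infty$ off the support $(\sigma x_\gamma^-+\mu,\sigma x_\gamma^++\mu)$; in particular $\ell_{\theta_0}=\ell_{\gamma_0}$ and $G_{\theta_0}=G_{\gamma_0}$.

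First I would record two elementary facts. By Proposition~\ref{prop:GEV}, for $\theta\in\Theta$ (hence $\gamma>-1$) the function $\ell_\theta$ is bounded from above, so $G_{\gamma_0}[\ell_\theta]\in[-\infty,+\infty)$ is well defined for every $\theta\in\Theta$. Moreover $G_{\gamma_0}[\ell_{\gamma_0}]$ is \emph{finite}: under $G_{\gamma_0}$ the variable $(1+\gamma_0 X)^{-1/\gamma_0}$ is standard exponential (and $e^{-X}$ is when $\gamma_0=0$), so both terms defining $\ell_{\gamma_0}$ have finite $G_{\gamma_0}$-expectation, using $\bbE[\log E]=-c$ and $\bbE[E]=1$ for a standard exponential $E$ and $c$ the Euler--Mascheroni constant. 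This finiteness is precisely what lets the manipulations below avoid the indeterminate form $\infty-\infty$.

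Next I would apply Jensen's inequality. For $X\sim G_{\gamma_0}$ set $Z:=g_\theta(X)/g_{\gamma_0}(X)\geq 0$, which is well defined since $g_{\gamma_0}(X)>0$ almost surely. Then $\bbE[Z]=\int_{\{g_{\gamma_0}>0\}}g_\theta\,dx\leq\int_{\bbR}g_\theta\,dx=1$, and since $\log^+Z\leq Z$ is integrable, $G_{\gamma_0}[\log Z]\in[-\infty,+\infty)$ is well defined; concavity of $\log$ then yields $G_{\gamma_0}[\log Z]\leq\log\bbE[Z]\leq 0$. On $\{g_{\gamma_0}>0\}$, which carries all the mass of $G_{\gamma_0}$, we have the pointwise identity $\ell_\theta=\ell_{\gamma_0}+\log Z$, and since $G_{\gamma_0}[\ell_{\gamma_0}]$ is finite we may add it to both sides to get $G_{\gamma_0}[\ell_\theta]=G_{\gamma_0}[\ell_{\gamma_0}]+G_{\gamma_0}[\log Z]\leq G_{\gamma_0}[\ell_{\gamma_0}]$.

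Finally I would handle the equality case by tracing back the two inequalities. Equality forces $\bbE[Z]=1$, i.e. $g_\theta=0$ Lebesgue-almost everywhere off $\{g_{\gamma_0}>0\}$; and by strict concavity of $\log$, $Z$ equals $G_{\gamma_0}$-a.s. a constant $c$, so that $g_\theta=c\,g_{\gamma_0}$ Lebesgue-a.e. on $\{g_{\gamma_0}>0\}$, and integrating and using $\int g_\theta=\int g_{\gamma_0}=1$ forces $c=1$. Hence $g_\theta=g_{\gamma_0}$ Lebesgue-a.e., so $G_\theta=G_{\gamma_0}$, and by identifiability of the three-parameter GEV family---the sign of $\gamma$ is read off from whether $G_\theta$ has a finite left endpoint ($\gamma>0$), a finite right endpoint ($\gamma<0$), or neither ($\gamma=0$), after which the endpoint(s) and the boundary/tail behaviour of $F_\gamma((\cdot-\mu)/\sigma)$ determine $(\gamma,\mu,\sigma)$ uniquely---we conclude $\theta=\theta_0$. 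I expect the only genuine care to be the bookkeeping around the value $-\infty$ of $\ell_\theta$, so that every integral stays well defined; the inequality itself is a routine Gibbs/Jensen argument and identifiability of the GEV family is classical.
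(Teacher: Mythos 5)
Your proof is correct and follows essentially the same route as the paper, which simply observes that $G_{\gamma_0}[\ell_{\theta_0}-\ell_\theta]$ is the Kullback--Leibler divergence between the two GEV distributions (citing van der Vaart for its nonnegativity and the equality case) and invokes identifiability of the GEV family. The only difference is that you make the argument self-contained---proving Gibbs' inequality via Jensen, checking finiteness of $G_{\gamma_0}[\ell_{\gamma_0}]$ to avoid $\infty-\infty$, and sketching identifiability via support endpoints---which is careful and sound but not a different method.
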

\begin{proof}[Proof of Lemma \ref{lem:wald1}]
The quantity $G_{\gamma_0}[\ell_{\theta_0}-\ell_\theta]$ is the Kullback-Leibler divergence of th GEV distributions with parameters $\theta_0$ and $\theta$ and is known to be non-negative (see van der Vaart \cite{vdV98} section 5.5). It vanishes if and only if the two distributions agree. This occurs if and only if $\theta=\theta_0$ because the GEV model is identifiable.
\end{proof}
\begin{lemma}\label{lem:wald2}
For $B\subset\Theta$, define
\[
\ell_B(x)=\sup_{\theta\in B}\ell_\theta(x),\quad x\in\bbR.
\]
Let $\theta\in\Theta$ and  $B(\theta,\varepsilon)$ be the open ball in $\Theta$ with center $\theta$ and radius $\varepsilon>0$. Then,
\[
\lim_{\varepsilon\to 0} G_{\gamma_0}[\ell_{B(\theta,\varepsilon)}]= G_{\gamma_0}[\ell_\theta].
\]
\end{lemma}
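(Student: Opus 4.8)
The plan is to deduce the convergence from a monotone-limit argument combined with the dominated/monotone convergence theorem applied to the measure $G_{\gamma_0}$. First I would observe that the family $\varepsilon \mapsto \ell_{B(\theta,\varepsilon)}$ is nondecreasing in $\varepsilon$: if $\varepsilon' < \varepsilon$ then $B(\theta,\varepsilon') \subset B(\theta,\varepsilon)$, so $\ell_{B(\theta,\varepsilon')} \le \ell_{B(\theta,\varepsilon)}$ pointwise. Moreover $\ell_{B(\theta,\varepsilon)}(x) \ge \ell_\theta(x)$ for every $\varepsilon>0$ and every $x$, since $\theta \in B(\theta,\varepsilon)$. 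Hence the pointwise limit $\lim_{\varepsilon \to 0}\ell_{B(\theta,\varepsilon)}(x)$ exists (as a decreasing-in-$\varepsilon$, i.e.\ nonincreasing, limit) and is at least $\ell_\theta(x)$. The first real step is to show that this limit is in fact \emph{equal} to $\ell_\theta(x)$: this is where continuity of $(\gamma,\mu,\sigma,x) \mapsto \ell_{(\gamma,\mu,\sigma)}(x)$ on $\Theta \times \bbR$ enters — for fixed $x$, given any sequence $\theta_j \to \theta$ one has $\ell_{\theta_j}(x) \to \ell_\theta(x)$, and taking suprema over shrinking balls yields $\inf_{\varepsilon>0}\ell_{B(\theta,\varepsilon)}(x) = \ell_\theta(x)$. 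One must be slightly careful near the boundary of the domain, i.e.\ when $1+\gamma\frac{x-\mu}{\sigma}$ is small or negative, where $\ell_\theta(x) = -\infty$; there the semicontinuity properties of $\ell$ (it is upper semicontinuous, being $-\infty$ outside an open set and smooth inside, cf.\ Proposition \ref{prop:GEV}) still give the inequality in the right direction, and $\ell_\theta(x)=-\infty$ forces the decreasing limit to equal $-\infty$ as well for points genuinely outside the open domain; for $x$ on the boundary curve itself one checks the limit directly from the explicit formula, but since this is a $G_{\gamma_0}$-null set it does not affect the integral.

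Next I would pass from the pointwise statement to the integral statement. Since $\ell_{B(\theta,\varepsilon)}$ decreases to $\ell_\theta$ as $\varepsilon \downarrow 0$, the monotone convergence theorem (in its decreasing form) gives $\lim_{\varepsilon\to 0} G_{\gamma_0}[\ell_{B(\theta,\varepsilon)}] = G_{\gamma_0}[\ell_\theta]$, \emph{provided} some $\ell_{B(\theta,\varepsilon_0)}$ is $G_{\gamma_0}$-integrable from above (which is automatic, since $\ell_\theta \le \ell_\gamma(x_\gamma^\ast)$ is bounded above by Proposition \ref{prop:GEV}, uniformly over the compact closure of a small ball $B(\theta,\varepsilon_0)$ whose closure lies in $\Theta$) — or, handling the possibly non-integrable negative part, the dominated-below version suffices because $G_{\gamma_0}[\ell_\theta] > -\infty$ (the integral $G_{\gamma_0}[\ell_\theta]$ is finite; this is a standard fact about GEV log-likelihoods, and in the case $\theta=\theta_0$ it equals the negative entropy of $G_{\gamma_0}$). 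It is cleanest to invoke the monotone convergence theorem on the nonnegative lower-semicontinuous functions $M_0 - \ell_{B(\theta,\varepsilon)}$ where $M_0$ is a finite upper bound valid on a fixed small ball, exactly as in the proof of Lemma \ref{lem:Fatou}.

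The main obstacle I anticipate is controlling $\ell_{B(\theta,\varepsilon)}$ near the edge of its domain: as $\varepsilon$ shrinks the ball still contains parameters $\theta'$ whose domain $(x_{\gamma'}^-, x_{\gamma'}^+)$ differs slightly from that of $\theta$, so for $x$ near the boundary curve $1+\gamma\frac{x-\mu}{\sigma}=0$ the supremum $\ell_{B(\theta,\varepsilon)}(x)$ can be much larger (even $+\infty$ if some $\gamma' \le -1$ were in the ball — but this is excluded once $\varepsilon$ is small since $\gamma>-1$) or merely finite-but-not-close-to-$\ell_\theta(x)=-\infty$. The resolution is that the set of such bad $x$ shrinks to the boundary curve as $\varepsilon\to 0$, hence to a $G_{\gamma_0}$-null set, and on this shrinking set $\ell_{B(\theta,\varepsilon)}$ is uniformly bounded above (by the compactness argument above), so its contribution to the integral vanishes by dominated convergence. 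Once this is in hand the rest is the routine monotone-limit bookkeeping described above.
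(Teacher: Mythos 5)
Your proposal is correct and follows essentially the same route as the paper: a uniform upper bound $M_0$ on $\ell_{\theta'}$ for $\theta'$ in a small ball (from Proposition \ref{prop:GEV} plus $\sigma$ bounded away from $0$), pointwise convergence $\ell_{B(\theta,\varepsilon)}(x)\to\ell_\theta(x)$ by continuity of $\theta\mapsto\ell_\theta(x)$, and then a limit theorem applied to the nonnegative functions $M_0-\ell_{B(\theta,\varepsilon)}$ (the paper uses Fatou plus the trivial bound $\ell_{B(\theta,\varepsilon)}\geq\ell_\theta$, while you exploit monotonicity in $\varepsilon$ and monotone convergence --- an immaterial difference). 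Only note that your parenthetical claim that $G_{\gamma_0}[\ell_\theta]$ is finite for all $\theta$ is not true in general (it can be $-\infty$ when the supports mismatch), but this is harmless since your preferred argument via $M_0-\ell_{B(\theta,\varepsilon)}$ does not use it.
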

\begin{proof}[Proof of Lemma \ref{lem:wald2}]
Proposition \ref{prop:GEV} implies 
\[
\ell_\theta(x)=\ell_\gamma((x-\mu)/\sigma)-\log\sigma\leq m_\gamma-\log\sigma.
\]
One deduce that if  $B$ is contained in  $(1,\bar\gamma]\times [\bar\sigma,+\infty)\times \bbR$ for some $\bar\gamma> -1$ and $\bar\sigma>0$ , then there exists $M(\bar\gamma,\bar\sigma)$ such that
\[
\ell_\theta(x)\leq M(\bar\gamma,\bar\sigma) \quad \mbox{for all}\ \theta\in B, x\in\bbR.
\]
Hence there exists $M>0$ such that function $M-\ell_{B(\theta,\varepsilon)}$ is non-negative for $\varepsilon$ small enough. The continuity of $\theta\mapsto \ell_\theta(x)$ on $\Theta$ implies 
\[
\lim_{\varepsilon\to 0}\ell_{B(\theta,\varepsilon)}(x)=\ell_\theta(x)\quad \mbox{for all}\ x\in\bbR. 
\]
Then, Fatou's Lemma entails
\[
G_{\gamma_0}\big[\liminf_{\varepsilon\to 0}(M-\ell_{B(\theta,\varepsilon)})\big]\leq \liminf_{\varepsilon\to 0} G_{\gamma_0}\big[M-\ell_{B(\theta,\varepsilon)}\big],
\]
whence we obtain
\[
\limsup_{\varepsilon\to 0} G_{\gamma_0}[\ell_{B(\theta,\varepsilon)}]\leq G_{\gamma_0}[\ell_\theta].
\]
On the other hand, $\theta\in B(\theta,\varepsilon)$ implies $G_{\gamma_0}[\ell_{B(\theta,\varepsilon)}]\geq G_{\gamma_0}[\ell_\theta]$. We deduce 
\[
\lim_{\varepsilon\to 0} G_{\gamma_0}[\ell_{B(\theta,\varepsilon)}]= G_{\gamma_0}[\ell_\theta].
\]
\end{proof}

\begin{proof}[Proof of Proposition \ref{prop:wald}]
In view of Lemmas \ref{lem:wald1} and \ref{lem:wald2}, for each $\theta\in K$ such that $\theta\neq \theta_0$, there exists  $\varepsilon_\theta>0$ such that 
\[
G_{\gamma_0}[\ell_{B(\theta,\varepsilon_\theta)}]< G_{\gamma_0}[\ell_{\theta_0}].
\]
Fix $\delta>0$. The set $\Delta=\{\theta\in K; \|\theta-\theta_0\|\geq \delta\}$ is compact and is covered by the open balls  $\{B(\theta,\varepsilon_\theta), \theta\in \Delta\}$. Let $B_i=B(\theta_i,\varepsilon_{\theta_i})$, $1\leq i\leq p$, be a finite subcover. Using the relation $\widetilde L_n(\theta)=\bbP_n[\ell_\theta]$, we see that
\[
\sup_{\theta\in\Delta} \widetilde L_n(\theta) \leq \max_{1\leq i\leq p} \bbP_n[\ell_{B_i}].
\]
The function $\ell_{B_i}$ is upper semi-continuous and bounded from above, so that Lemma \ref{lem:Fatou} entails
\[
\limsup_{n\to +\infty} \bbP_n[\ell_{B_i}]\leq G_{\gamma_0}[\theta_i]\quad \mbox{a.s.},
\]
whence
\begin{equation}\label{eq:wald1}
\limsup_{n\to +\infty} \sup_{\theta\in \Delta} \widetilde L_n(\theta)\leq \max_{1\leq i\leq p} G_{\gamma_0}[\theta_i] <G_{\gamma_0}[\theta_0]\quad \mbox{a.s.}.
\end{equation}
According to Lemma~\ref{lem:crucial}, $\bbP_n[\ell_{\theta_0}]\stackrel{a.s.}\longrightarrow G_{\gamma_0}[\ell_{\gamma_0}]$, so that
\begin{equation}\label{eq:wald2}
\liminf_{n\to +\infty} \sup_{\theta\in K} \widetilde L_n(\theta)\geq G_{\gamma_0}[\theta_0]\quad \mbox{a.s.}.
\end{equation}
Since $\widetilde\theta_n^K$ realizes the maximum of $\widetilde L_n$ over $K$, Equations \eqref{eq:wald1} and \eqref{eq:wald2} together entail that $\widetilde\theta_n^K\in K\setminus \Delta$ for large $n$. Equivalently, $\|\widetilde\theta_n^K-\theta_0\|<\delta$ for large $n$. Since $\delta$ is arbitrary, this proves the convergence $\widetilde\theta_n^K\stackrel{a.s.}\longrightarrow\theta_0$ as $n\to+\infty$.
\end{proof}

\begin{proof}[Proof of Theorem \ref{theo1}]
Let $K\subset \Theta$ be a compact neighborhood of $\theta_0$ as in Proposition \ref{prop:wald} and define $\widetilde \theta_n^K$ by Equation \eqref{eq:tildethetaK}. We prove that Theorem \ref{theo1}  holds true with the  sequence of estimators 
\[
(\widehat\gamma_n,\widehat\mu_n,\widehat\sigma_n)=(\widetilde\gamma_n^K,a_m\widetilde\mu_n^K+b_m,a_m\widetilde\sigma_n^K),\quad n\geq 1.
\]
According to Lemma \ref{lem:basic}, $(\widehat\gamma_n,\widehat\mu_n,\widehat\sigma_n)$ is a MLE if and only if $\widetilde L_n$ has a local maximum at $\widetilde \theta_n^K=(\widetilde\gamma_n^K,\widetilde\mu_n^K,\widetilde\sigma_n^K)$. Since $\widetilde \theta_n^K=\argmax_{\theta\in K} \widetilde L_n(\theta)$, this is the case as soon as $\widetilde L_n$ lies in the interior set $\mathrm{int}(K) $ of $K$. 
Proposition \ref{prop:wald} implies the almost surely convergence $\widetilde\theta_n^K\stackrel{a.s.}\longrightarrow \theta_0$ which is equivalent to Equation \eqref{eq:theo1.2}. Furthermore, since $\theta_0\in\mathrm{int}(K)$, this implies $\widetilde\theta_n^K\in \mathrm{int}(K)$ for large $n$ so that $(\widehat\gamma_n,\widehat\mu_n,\widehat\sigma_n)$ is a MLE for large $n$. This proves Equation \eqref{eq:theo1.1}.
\end{proof}

\section*{Acknowledgements}
C. Dombry is grateful to Laurens de Haan for suggesting the research problem in the first place and  for useful comments that improved greatly  an early  version of the manuscript.

\appendix
\section*{Appendix: Proof of Lemma~\ref{lem:crucial}}
We will use the following criterion.
\begin{lemma}\label{prop:A1}
Suppose $F\in D(G_{\gamma_0})$ and $\lim_{n\to+\infty} m(n)=+\infty$. We note $Y_{m}=\ell_{\gamma_0}(a_m^{-1}(M_{1,m}-b_m))$. If there exists a sequence  $(\alpha_n)_{n\geq 1}$ and $p>2$ such that
\[
\sum_{n\geq 1}n\bbP(|Y_{m}|> \alpha_n)<+\infty\quad \mbox{and}\quad 
\sup_{n\geq 1}\bbE[|Y_{m}|^p1_{\{|Y_m|\leq \alpha_n\}}]<+\infty,
\]
then Equation \eqref{eq:crucial} holds true.
\end{lemma}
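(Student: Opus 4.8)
The plan is to reduce the almost-sure convergence of $\bbP_n[\ell_{\gamma_0}] = \frac{1}{n}\sum_{k=1}^n Y_{k,m(n)}$ (where $Y_{k,m} = \ell_{\gamma_0}(a_m^{-1}(M_{k,m}-b_m))$ are, for fixed $n$, i.i.d.\ copies of $Y_m$) to a strong law for a triangular array via a truncation argument combined with Borel--Cantelli. First I would record the ``target'' expectation: by Lemma~\ref{prop1} we have the weak convergence $\bbP_n \Rightarrow G_{\gamma_0}$, and since $\ell_{\gamma_0}$ is bounded above (Proposition~\ref{prop:GEV}, case $\gamma_0 > -1$, gives $\ell_{\gamma_0}\le m_{\gamma_0}$), Lemma~\ref{lem:Fatou} already yields $\limsup_n \bbP_n[\ell_{\gamma_0}] \le G_{\gamma_0}[\ell_{\gamma_0}]$ almost surely. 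So the real content is the matching lower bound $\liminf_n \bbP_n[\ell_{\gamma_0}] \ge G_{\gamma_0}[\ell_{\gamma_0}]$, and the moment/tail hypotheses are exactly what is needed to control the lower tail of $Y_m$ (the part where $\ell_{\gamma_0}\to-\infty$ near the endpoints).

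Next I would set up the truncation. Write $Y_{k,m} = Y_{k,m}1_{\{|Y_{k,m}|\le \alpha_n\}} + Y_{k,m}1_{\{|Y_{k,m}|>\alpha_n\}}$ and correspondingly split $\bbP_n[\ell_{\gamma_0}]$ into a truncated average $S_n^{\mathrm{tr}}$ and a remainder $R_n$. For the remainder, the first hypothesis $\sum_n n\,\bbP(|Y_m|>\alpha_n)<\infty$ together with a union bound over $k=1,\dots,n$ gives $\sum_n \bbP(\exists k\le n: |Y_{k,m}|>\alpha_n)<\infty$, so by Borel--Cantelli, almost surely for all large $n$ every $Y_{k,m}$ with $k\le n$ satisfies $|Y_{k,m}|\le\alpha_n$; hence $R_n=0$ eventually and $\bbP_n[\ell_{\gamma_0}] = S_n^{\mathrm{tr}}$ for large $n$. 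For the truncated average I would show $S_n^{\mathrm{tr}} - \bbE[S_n^{\mathrm{tr}}] \to 0$ a.s.\ by a Bernstein/Chebyshev-plus-Borel--Cantelli argument: the summands are centered, bounded by $2\alpha_n$, and have variance bounded by $\bbE[|Y_m|^2 1_{\{|Y_m|\le\alpha_n\}}] \le (\sup_n \bbE[|Y_m|^p 1_{\{|Y_m|\le\alpha_n\}}])^{2/p}\,(2\alpha_n)^{2-2/p}$ via Hölder, which is $o(\alpha_n^{2-2/p})$ up to a constant, so $\mathrm{Var}(S_n^{\mathrm{tr}}) = O(\alpha_n^{2-2/p}/n)$; choosing the right moderate-deviation bound and summing over a subsequence (then filling gaps by monotonicity-type control of partial sums) gives the a.s.\ convergence. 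Finally $\bbE[S_n^{\mathrm{tr}}] = \bbE[Y_m 1_{\{|Y_m|\le\alpha_n\}}] = \bbE[Y_m] - \bbE[Y_m 1_{\{|Y_m|>\alpha_n\}}]$; the last term vanishes as $n\to\infty$ because $\sup_n \bbE[|Y_m|^p 1_{\{|Y_m|\le \alpha_n\}}]<\infty$ with $p>2$ implies uniform integrability of the family $\{Y_m\}$ and control of its tails, while $\bbE[Y_m]=\bbE[\ell_{\gamma_0}(\widetilde M_{1,m})]\to G_{\gamma_0}[\ell_{\gamma_0}]$ follows by combining Lemma~\ref{lem:Fatou} (upper bound) with the lower-tail uniform integrability coming from the $p$-th moment bound (lower bound). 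Chaining these gives $\bbP_n[\ell_{\gamma_0}]\to G_{\gamma_0}[\ell_{\gamma_0}]$ a.s.

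The main obstacle I anticipate is the a.s.\ convergence $S_n^{\mathrm{tr}} - \bbE[S_n^{\mathrm{tr}}]\to 0$: because $\alpha_n$ grows, the truncation level changes with $n$, so this is not a clean Kolmogorov SLLN but a genuine triangular-array statement, and one must be careful that $\alpha_n$ does not grow so fast that the variance bound $O(\alpha_n^{2-2/p}/n)$ fails to be summable along a suitable subsequence. The hypotheses are calibrated precisely for this: $p>2$ buys a power of $\alpha_n$ strictly below $2$, and the interplay with $\sum_n n\,\bbP(|Y_m|>\alpha_n)<\infty$ forces $\alpha_n$ to be compatible in size. I would handle this with an explicit exponential (Bernstein) inequality for the centered truncated sum and a dyadic-blocking argument, which is the standard route for array SLLNs and should go through without surprises once the variance estimate above is in hand. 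The remaining verification — that for the Pareto case, and more generally under condition~\eqref{eq:growth}, one can actually exhibit such a sequence $(\alpha_n)$ and $p>2$ — is the content of the regular-variation computations that constitute the rest of the appendix, and is logically separate from the present lemma.
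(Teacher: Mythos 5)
Your overall skeleton (truncate at level $\alpha_n$, kill the remainder by the union bound and Borel--Cantelli, then prove an array law of large numbers for the truncated averages and show the truncated means converge to $G_{\gamma_0}[\ell_{\gamma_0}]$) is exactly the paper's. The gap is in how you handle the centered truncated sum. You propose a variance bound of order $\alpha_n^{2-2/p}$ per summand plus a Bernstein exponential inequality and a ``dyadic blocking, then fill the gaps by monotonicity'' argument. Two problems. First, this is a genuine triangular array: the $n$-th row consists of $Y_{k,m(n)}$, $k\le n$, and when $n$ changes the block length $m(n)$ changes, so the rows are built from completely different block maxima; there is no monotone or nesting relation between $\widetilde S_n$ and $\widetilde S_{n'}$, and the standard subsequence-plus-gap-filling device for SLLNs does not apply. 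Second, your claim that the hypotheses ``force $\alpha_n$ to be compatible in size'' is false: nothing in the lemma bounds the growth of $\alpha_n$, and in the actual application one takes $\alpha_n=\exp(m(n)/(2pm_0))$ with $m(n)/\log n\to\infty$, so $\alpha_n$ grows faster than any power of $n$. Bernstein's inequality with sup bound $\alpha_n$ then gives an exponent of order $n\varepsilon/\alpha_n\to 0$, i.e.\ a useless, non-summable bound. The paper avoids both issues by exploiting the hypothesis in its strong form: $\sup_n\bbE[|Y_m|^p 1_{\{|Y_m|\le\alpha_n\}}]<\infty$ means the \emph{$p$-th} moments of the truncated variables are uniformly bounded, so a Rosenthal-type inequality (Petrov, Theorem 2.10) gives $\bbE[|\widetilde S_n-\bbE\widetilde S_n|^p]\le C(p)n^{p/2}\cdot O(1)$, hence by Markov the deviation probabilities are $O(n^{-p/2})$, summable because $p>2$, and Borel--Cantelli applies row by row with no constraint whatsoever on the size of $\alpha_n$. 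Replacing your Bernstein/blocking step by this $p$-th moment inequality is the missing ingredient.

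A secondary point: to show the truncated means converge, you write $\bbE[Y_m 1_{\{|Y_m|\le\alpha_n\}}]=\bbE[Y_m]-\bbE[Y_m 1_{\{|Y_m|>\alpha_n\}}]$ and claim uniform integrability of $\{Y_m\}$. The hypotheses control only the truncated moments, so neither the finiteness of $\bbE[Y_m]$ nor the vanishing of the tail term is immediate. The clean route (the paper's) is to note that $\widetilde Y_{1,m}\Rightarrow \ell_{\gamma_0}(Z)$ (since $\bbP[\widetilde Y_{1,m}\neq Y_{1,m}]\to 0$ and $Y_{1,m}\Rightarrow\ell_{\gamma_0}(Z)$ by \eqref{eq:convBM}), and that the uniform bound on $\bbE[|\widetilde Y_{1,m}|^p]$ gives uniform integrability of the \emph{truncated} family, hence $\bbE[\widetilde Y_{1,m}]\to G_{\gamma_0}[\ell_{\gamma_0}]$ directly.
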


\begin{proof}[Proof of lemma \ref{prop:A1}]
We note  $\mu=G_{\gamma_0}[\ell_{\gamma_0}]$ and we define
\[
Y_{k,m}= \ell_{\gamma_0}(a_m^{-1}(M_{k,m}-b_m))\quad \mbox{and}\quad S_n= \sum_{k=1}^n Y_{k,m}. 
\]
With these notations, \eqref{eq:crucial} is equivalent to $n^{-1}S_n \stackrel {a.s.}\to \mu$. 
We introduce the truncated variables 
\[
\widetilde Y_{k,m}=Y_{k,m}1_{\{|Y_{k,m}|\leq \alpha_n\}}\quad \mbox{and}\quad\widetilde S_n=\sum_{k=1}^n \widetilde Y_{k,m}. 
\]
Clearly,
\begin{eqnarray*}
\bbP[\widetilde S_n\neq  S_n]&\leq&  \bbP[\widetilde Y_{k,m}\neq Y_{k,m} \mbox{ for some }  k\in\{1,\ldots,n\} ]\nonumber\\
&\leq& n\bbP[|Y_{m}|> \alpha_n],
\end{eqnarray*}
so that  $\sum_{n\geq 1}n\bbP[|Y_{m}|> \alpha_n]<+\infty$ entails $\sum_{n\geq 1} \bbP[\widetilde S_n\neq  S_n]< +\infty$.
By the Borel-Cantelli Lemma, this implies that the sequences $(\widetilde S_n)_{n\geq 1}$ and $(S_n)_{n\geq 1}$ coincide eventually, 
whence $n^{-1}S_n \stackrel {a.s.}\to \mu$ if and only if $n^{-1}\widetilde S_n \stackrel {a.s.}\to \mu$. We now prove this last convergence. 

We first prove that $\bbE[\widetilde Y_{1,m}]\to \mu$. Indeed, by the continuous mapping theorem, the weak convergence \eqref{eq:convBM} implies $Y_{1,m}\Rightarrow \ell_{\gamma_0}(Z)$ with $Z\sim G_{\gamma_0}$. Since $\bbP[\widetilde Y_{1,m}\neq  Y_{1,m}]$ converges to $0$ as $n\to +\infty$, it also holds $\widetilde Y_{1,m}\Rightarrow \ell_{\gamma_0}(Z)$. Together with the condition $\sup_{n\geq 1}\bbE[|\widetilde Y_{1,m}|^p]<\infty$, this entails  $\bbE[\widetilde Y_{1,m}]\to \bbE[\ell_{\gamma_0}(Z)]=\mu$.  

Next, Theorem 2.10 in Petrov \cite{P95} provides the upper bound 
\[
\bbE[|\widetilde S_n-\bbE[\widetilde S_n]|^p]\leq C(p)n^{p/2}\bbE[|\widetilde Y_{1,m}-\bbE[\widetilde Y_{1,m}|^p]
\]
for some constant $C(p)>0$ depending only on $p$. Equivalently,
\[
\bbE[|n^{-1}\widetilde S_n-\mu_n|^p]\leq C(p)n^{-p/2}\bbE[|\widetilde Y_{1,m}-\mu_n|^p].
\]
with $\mu_n=\bbE[ \widetilde Y_{1,m}]$. Furthermore,  
\[
\bbE[|\widetilde Y_{1,m}-\mu_n|^p]\leq 2^{p-1}(\bbE[|\widetilde Y_{1,m}|^p]+|\mu_n|^p)
\]
is uniformly bounded by some constant $C>0$. By the Markov inequality, for all $\varepsilon>0$,  
\[
\bbP[|n^{-1}\widetilde S_n-\mu_n|\geq\varepsilon ]\leq \varepsilon^{-p}\bbE[|n^{-1}\widetilde S_n-\mu_n|^p]\leq \varepsilon^{-p}C(p)Cn^{-p/2}.
\]
Since $p>2$, it holds 
\[
\sum_{n\geq 1}\bbP[|n^{-1}\widetilde S_n-\mu_n|\geq\varepsilon ]<+\infty 
\]
and the Borel-Cantelli Lemma entails  $n^{-1}\widetilde S_n-\mu_n\stackrel{a.s.}\to 0$. 
Since $\mu_n\to\mu$, we deduce $n^{-1}\widetilde S_n\stackrel{a.s.}\to \mu$  which proves the Lemma. 
\end{proof}

\begin{proof}[Proof of Lemma \ref{lem:crucial}]
We prove that there exists a sequence $(\alpha_n)$ and $p>2$ satisfying   
\begin{equation}\label{eq:final1}
\sum_{n\geq 1} n\alpha_n^{-p}<+\infty
\end{equation}
and
\begin{equation}\label{eq:final2}
\sup_{n\geq 1}\bbE[(|Y_{m}|\wedge \alpha_n)^p]<+\infty.
\end{equation}
The Markov inequality yields
\[
\bbP[|Y_m|\geq \alpha_n] \leq \alpha_n^{-p}\bbE[(|Y_{m}|\wedge \alpha_n)^p]
\]
so that Equations \eqref{eq:final1} and \eqref{eq:final2} together entail 
\[
\sum_{n\geq 1}n\bbP(|Y_{m}|> \alpha_n)<+\infty\quad \mbox{and}\quad 
\sup_{n\geq 1}\bbE[|Y_{m}|^p1_{\{|Y_m|\leq \alpha_n\}}]<+\infty,
\]
This shows that Equations \eqref{eq:final1} and \eqref{eq:final2} together imply the assumptions of Lemma~\ref{prop:A1} and prove  Lemma \ref{lem:crucial}.

We first evaluate the quantity $\bbE[(|Y_{m}|\wedge \alpha_n)^p]$ from Equation \eqref{eq:final2}. Recall that $Y_m=\ell_{\gamma_0}((M_{1,m}-b_m)/a_m)$. 
It is well known that the random variable $X_i$ with distribution function $F$ has the same distribution as the random variable $F^\leftarrow(V)$, with $V$ a uniform random variable on $(0,1)$. We deduce that the random variable $M_{1,m}=\vee_{i=1}^m X_i$ has the same distribution as $F^\leftarrow(V_m)$, with $V_m$ a random variable with distribution $mv^{m-1}1_{(0,1)}(v)dv$ (this is the distribution of the maximum of $m$ i.i.d. uniform random variables on $[0,1]$). Hence,
\[
\bbE[(|Y_{m}|\wedge \alpha_n)^p]=\int_0^1 (|\ell_{\gamma_0}((F^\leftarrow(v)-b_m)/a_m)|\wedge \alpha_n)^p mv^{m-1}dv.
\] 
The relations $U(x)=F^\leftarrow(1-1/x)$ and $b_m=U(m)$ together with the change of variable $v=1-1/(mx)$ yield
\[
\bbE[(|Y_{m}|\wedge \alpha_n)^p]=\int_{1/m}^\infty \big(|\ell_{\gamma_0}(\widetilde U_m(x))|\wedge \alpha_n\big)^p \big(1-\frac{1}{mx}\big)^{m-1}x^{-2}dx
\]
where
\[
\widetilde U_m(x)=\frac{U(mx)-U(m)}{a_m}.
\]
We now provide an upper bound for the integral and we use the following estimates. There exists a constant $c>0$ such that 
\[
 |\ell_{\gamma_0}(y)|\leq \left\{ \begin{array}{ll} c(1+{\gamma_0}y)^{-1/\gamma_0}, &y< 0,\\ (1+1/\gamma_0)\log(1+\gamma_0y)+1, & y\geq 0\end{array}\right..
\]
Note that $\widetilde U_m(x)$ is positive for $x>1$ and negative for $x<1$. Furthermore, for all $x\geq 1/m$ and $m\geq 2$,
\[
\big(1-\frac{1}{mx}\big)^{m-1}\leq \exp(-(m-1)/(mx))\leq \exp(-1/(2x)).
\]
Using these estimates, we obtain the following upper bound: for $m\geq m_0$ ($m_0$ to be precised later),
\begin{equation}\label{eq:sum}
\bbE[(|Y_{m}|\wedge \alpha_n)^p]\leq I_1+I_2+I_3
\end{equation}
with
\begin{eqnarray*}
I_1&=&\int_{1/m}^{m_0/m} \alpha_n^p \exp(-1/(2x))x^{-2}dx,\\
I_2&=&\int_{m_0/m}^1 c^p\big(1+{\gamma_0}\widetilde U_m(x)\big)^{-p/\gamma_0}\exp(-1/(2x))x^{-2}dx,\\
I_3&=&\int_1^\infty \big((1+1/\gamma_0)\log\big(1+\gamma_0\widetilde U_m(x)\big)+1 \big)^p\exp(-1/(2x))x^{-2}dx.
\end{eqnarray*}
The integral $I_1$ can be computed explicitely and 
\begin{equation}\label{eq:I1}
I_1 \leq 4\alpha_n^p\exp(-m/(2m_0)).
\end{equation}
To estimate $I_2$ and $I_3$, we need upper and lower bounds for $\widetilde U_m(x)$ and we have to distinguish between the three cases $\gamma_0>0$,  $\gamma_0\in (-1,0)$ and $\gamma=0$.\\
{\bf Case $\gamma_0>0$:} According to Theorem \ref{theo:DA}, the function $U$ is regularly varying at infinity with index $\gamma_0>0$ and
\[
1+\gamma_0\widetilde U_m(x)=1+\gamma_0\frac{U(mx)-U(m)}{a_m}=\frac{U(mx)}{U(m)}.
\]
We use then  Potter's bound (see e.g. Proposition B.1.9 in \cite{dHF06}): for all $\varepsilon >0$, there exists $m_0\geq 1$ such that for $m\geq m_0$ and $mx\geq m_0$
\[
(1-\varepsilon)x^{\gamma_0}\min(x^\varepsilon,x^{-\varepsilon})\leq \frac{U(mx)}{U(m)}\leq (1+\varepsilon)x^{\gamma_0}\max(x^\varepsilon,x^{-\varepsilon}).
\]
We fix $\varepsilon \in(0,\gamma_0)$ and choose $m_0$ accordingly. Using the lower Potter's bound to estimate $I_2$ and the upper Potter's bound to estimate $I_3$, we get
\begin{eqnarray*}
I_2&\leq&\int_{m_0/m}^1 c^p\big((1-\varepsilon)x^{\gamma_0+\varepsilon}\big)^{-p/\gamma_0}\exp(-1/(2x))x^{-2}dx\\
&\leq &c^p(1-\varepsilon)^{-p/\gamma_0} \int_{0}^1 x^{-2-p-p\varepsilon/\gamma_0}\exp(-1/(2x))dx,
\end{eqnarray*}
and
\[
I_3\leq\int_1^\infty \big((1+1/\gamma_0)\log\big((1+\varepsilon)x^{\gamma_0+\varepsilon}  \big)+1 \big)^p\exp(-1/(2x))x^{-2}dx.
\]
These integrals are finite and this implies that $I_2$ and $I_3$ are uniformly bounded for $m\geq m_0$. From Equations \eqref{eq:sum} and \eqref{eq:I1}, we obtain
\[
\bbE[(|Y_{m}|\wedge \alpha_n)^p]\leq 4\alpha_n^p\exp(-m/(2m_0))+C,
\]
for some constant $C>0$. Finally, we set $\alpha_n^p\exp(-m/(2m_0))=1$, i.e. $\alpha_n=\exp(m/(p2m_0))$. Equation \eqref{eq:final2} is clearly satisfied and 
\[
n\alpha_n^{-p}=\exp[\log n -m/(2m_0)]=\exp[-(m/(2m_0\log n)-1)\log n].
\]
We check easily that the condition $\lim_{n\to +\infty}\frac{m(n)}{\log n}=+\infty$ implies  Equation \eqref{eq:final1}.\\
{\bf Case $\gamma_0<0$:} It follows from Theorem \ref{theo:DA} that the function $t\mapsto U(\infty)-U(t)$ is regularly varying at infinity with index $\gamma_0<0$ and that
\[
1+\gamma_0\widetilde U_m(x)=1+\gamma_0\frac{U(mx)-U(m)}{a_m}=\frac{U(\infty)-U(mx)}{U(\infty)-U(m)}.
\]
Then, the Potter's bounds  become: for all $\varepsilon >0$, there exists $m_0\geq 1$ such that for $m\geq m_0$ and $mx\geq m_0$
\[
(1-\varepsilon)x^{\gamma_0}\min(x^\varepsilon,x^{-\varepsilon})\leq \frac{U(\infty)-U(mx)}{U(\infty)-U(m)}\leq (1+\varepsilon)x^{\gamma_0}\max(x^\varepsilon,x^{-\varepsilon}).
\]
Using this, the proof is completed in the same way as in the case $\gamma_0>0$  with straightforward modifications.\\
{\bf Case $\gamma_0=0$:} In this case, Theorem B.2.18 in \cite{dHF06} implies that for all $\varepsilon>0$, there exists $m_0\geq 1$ such that for $m\geq m_0$ and $mx\geq m_0$,
\[
\Big|\frac{U(mx)-U(m)}{a_m}-\log x\Big|\leq \varepsilon \max(x^\varepsilon,x^{-\varepsilon}).
\]
Equivalently, for $m\geq m_0$ and $mx\geq m_0$,
\[
\log x - \varepsilon \max(x^\varepsilon,x^{-\varepsilon})\leq \widetilde U_m(x) \leq \log x + \varepsilon \max(x^\varepsilon,x^{-\varepsilon}).
\]
Using the lower bound to estimate $I_2$ and the upper bound to estimate $I_3$, we obtain
\begin{eqnarray*}
I_2&=&\int_{m_0/m}^1 c^p\exp(-p\widetilde U_m(x))\exp(-1/(2x))x^{-2}dx\\
&\leq&c^p \int_{0}^1\exp(-p\log x +p \varepsilon x^{-\varepsilon}-1/(2x))x^{-2}dx,
\end{eqnarray*}
and
\begin{eqnarray*}
I_3&=&\int_1^\infty \big(\widetilde U_m(x)+1 \big)^p\exp(-1/(2x))x^{-2}dx\\
&\leq&\int_1^\infty \big(\log x + \varepsilon x^{\varepsilon}+1 \big)^p\exp(-1/(2x))x^{-2}dx.
\end{eqnarray*}
For $\varepsilon\in(0,1/p)$, the integrals appearing in the upper bounds are finite and independent of $m$. This shows that $I_2$ and $I_3$ are uniformly bounded for $m\geq m_0$. The proof is then completed as in the case $\gamma_0>0$.
\end{proof}

\bibliographystyle{plain}
\bibliography{BiblioMLE}
\end{document}